\documentclass[11pt]{amsart}
 
\usepackage[T1]{fontenc}
\usepackage[english]{babel}
\usepackage[margin=2cm]{geometry}
\usepackage{amsmath,amssymb,amsthm,mathtools}
\usepackage[shortlabels]{enumitem}
\usepackage{aliascnt}
\usepackage[colorlinks=true,citecolor=red,linkcolor=blue]{hyperref}
\usepackage[nameinlink,noabbrev]{cleveref}
\usepackage{xcolor}
 
\theoremstyle{plain}

\newaliascnt{proposition}{theorem}
\newtheorem{proposition}[proposition]{Proposition}
\aliascntresetthe{proposition}
 
\newaliascnt{corollary}{theorem}
\newtheorem{corollary}[corollary]{Corollary}
\aliascntresetthe{corollary}
 
\newaliascnt{lemma}{theorem}
\newtheorem{lemma}[lemma]{Lemma}
\aliascntresetthe{lemma}
 
\theoremstyle{remark}
\newaliascnt{remark}{theorem}
\newtheorem{remark}[remark]{Remark}
\aliascntresetthe{remark}
 
\theoremstyle{definition}
\newaliascnt{example}{theorem}
\newtheorem{example}[example]{Example}
\aliascntresetthe{example}
 
\crefname{example}{Example}{Examples}
\crefname{theorem}{Theorem}{Theorems}
\crefname{proposition}{Proposition}{Propositions}
\crefname{corollary}{Corollary}{Corollaries}
\crefname{lemma}{Lemma}{Lemmas}
\crefname{remark}{Remark}{Remarks}

\newcommand{\T}{\mathcal T}
\newcommand{\G}{\mathbb G}
\newcommand{\F}{\mathcal F}
\newcommand{\Hh}{\mathfrak H}
\DeclarePairedDelimiterXPP{\E}[1]{\mathbb E}{\lbrace}{\rbrace}{}{#1}
\DeclarePairedDelimiterXPP{\Ehat}[1]{\widehat{\mathbb E}_{\pi}}{\lbrace}{\rbrace}{}{#1}
\newcommand{\Pp}{\mathbb P}
\newcommand{\dd}{\mathrm d}
\newcommand{\one}{\mathbf 1}
\newcommand{\zero}{\mathbf 0}
\newcommand{\eqd}{\stackrel{\mathrm d}{=}}

\title[Branch-stationary max-stable fields on trees]
{Branch-stationary max-stable fields on rooted trees}
 
\author{Enkelejd Hashorva}
\address{Enkelejd Hashorva, Department of Actuarial Science,
  University of Lausanne,\newline UNIL-Dorigny, 1015 Lausanne, Switzerland}
\email{Enkelejd.Hashorva@unil.ch}
 
\author{Svyatoslav Novikov}
\address{Svyatoslav Novikov, Department of Actuarial Science,
  University of Lausanne,\newline UNIL-Dorigny, 1015 Lausanne, Switzerland}
\email{Svyatoslav.Novikov@unil.ch}
 
\begin{document}
 
\begin{abstract}
We study max-stable random fields on the rooted tree
\(\T=\mathcal A^*\) under shifts to descendant subtrees.
Branch-Brown--Resnick stationarity is characterised through homogeneous
spectral classes, punctured tail measures, and local spectral tail fields.
For lognormal representers, it is equivalent to invariance of the
variogram under addition of a common prefix. We give Gaussian,
max-autoregressive, regenerative cascade, and free-group cluster
constructions, and show that summability on countably branching trees need
not satisfy a zero--one law. We also derive the associated
branch-invariant extreme-value and Archimax copulas.
\end{abstract}
 
\keywords{Branch stationarity, Brown--Resnick stationarity, max-stable
random fields, tail measures, spectral tail fields,
rooted trees, free monoids, free groups, Archimax copulas, utility models.}
 
\subjclass[2020]{Primary 60G70; Secondary 60G10, 60G60, 62H05}
 
\maketitle

 \section{Introduction}
 
Let \(\T\) be a non-empty countable index set, and let
\(Z=(Z_v)_{v\in\T}\) be a non-negative random field satisfying
\begin{equation}
  \E{Z_v^\alpha}=1,
  \qquad v\in\T
  \label{1}
\end{equation}
for some fixed \(\alpha>0\). Suppose further that
\begin{equation}
  \Pp\left\{\sup_{v\in\T}Z_v>0\right\}=1.
  \label{2}
\end{equation}
The associated max-stable field
\(\eta=(\eta_v)_{v\in\T}\) is specified through its
finite-dimensional distributions by
\begin{equation}
  \Pp\{\eta_v\leq x_v,\ v\in K\}
  =
  \exp\left\{
    -\E*{\max_{v\in K}\frac{Z_v^\alpha}{x_v^\alpha}}
  \right\}
  \label{3}
\end{equation}
for every non-empty finite \(K\subset\T\) and \(x_v>0\), \(v\in K\).
Its margins are unit \(\alpha\)-Fr\'echet; see, e.g.,
\cite{deHaan1984,molchanov2014invariance}.
 
Equivalently, let \((P_i)_{i\geq1}\) be the points of a Poisson
point process on \((0,\infty)\) with intensity
\[
  \alpha p^{-\alpha-1}\dd p
\]
and let \(Z^{(1)},Z^{(2)},\ldots\) be independent copies of \(Z\),
independent of \((P_i)_{i\geq1}\). Then de Haan's spectral
representation yields
\begin{equation}
  (\eta_v)_{v\in\T}
  \eqd
  \left(
    \bigvee_{i\geq1}P_iZ_v^{(i)}
  \right)_{v\in\T}.
  \label{4}
\end{equation}
 
The field \(Z\) is commonly called an \(\alpha\)-spectral process,
or a representer, of \(\eta\). Representers are not unique. Indeed,
if \(S>0\) is independent of \(Z\) and
\(\E{S^\alpha}=1\), then \(SZ\) is another representer of the same
max-stable field.
 
The seminal paper \cite{BrownResnick1977} showed that, for
\(\alpha=1\), the choice
\[
  Z_v
  =
  \exp\left\{
    W_v-\frac{1}{2}\operatorname{Var}(W_v)
  \right\},
  \qquad v\in\mathbb R,
\]
where \(W\) is a two-sided standard Brownian motion, generates a
stationary max-stable process. More generally, if
\(W=(W_v)_{v\in\mathbb R^p}\) is a centred Gaussian process with
stationary increments and \(W_0=0\), then
\[
  Z_v
  =
  \exp\left\{
    W_v-\frac{1}{2}\operatorname{Var}(W_v)
  \right\},
  \qquad v\in\mathbb R^p,
\]
generates a stationary max-stable field; see
\cite{KabluchkoSchlatherdeHaan2009} and
\cite{HashorvaKume2021,Hashorva2024} for related multivariate
extensions.
 
In this paper we consider an analogue on a non-Euclidean index set,
namely a rooted tree. Specifically, unless otherwise stated, we take
\[
  \T=\mathcal A^*
  =
  \{\varnothing\}\cup\bigcup_{n\geq1}\mathcal A^n
\]
the free monoid generated by a non-empty finite or countable alphabet
\(\mathcal A\), and regard the empty word \(\varnothing\) as the root.
A basic instance is the binary tree with
\(\mathcal A=\{L,R\}\), in which case
\[
  \T
  =
  \{\varnothing,L,R,LL,LR,RL,RR,\ldots\}.
\]
 
The monoid operation is concatenation, with \(\varnothing\) as its
identity. The resulting monoid has the natural rooted-tree structure
in which the children of \(u\in\T\) are \(ua\),
\(a\in\mathcal A\). For every \(u\in\T\), the descendant subtree
\(u\T\) is canonically identified with \(\T\) through the map
\(v\mapsto uv\).
 
For a configuration \(f=(f_v)_{v\in\T}\), define its descendant
subtree shift at \(u\in\T\) by
\[
  B^uf=(f_{uv})_{v\in\T}.
\]
A random field \(\eta=(\eta_v)_{v\in\T}\) is called
\emph{branch-stationary} if
\[
  B^u\eta\eqd\eta,
  \qquad u\in\T.
\]
We call a representer \(Z\)
\emph{branch-Brown--Resnick stationary}, abbreviated
\emph{branch-BR stationary}, if the corresponding max-stable field
in \eqref{3} is branch-stationary.
 
Rooted trees are natural index sets in economic models. They arise in
models of organisational and communication hierarchies
\cite{BoltonDewatripont1994,Garicano2000} and can also represent
hierarchically structured choice sets or histories in extensive-form
decision problems. A vertex may represent an agent, an organisational
unit, an alternative, or a history, whereas an edge \(u\to ua\) may
represent a communication link, transition, or action. Every non-root
vertex \(ua\) is canonically identified with its incoming edge
\((u,ua)\). Hence, apart from the root coordinate, a vertex-indexed
field can equivalently be interpreted as a field of shocks attached
to links, transitions, or actions. The descendant subtree below \(u\)
then represents the continuation hierarchy, decision problem, or game
following the history \(u\).
 
The associated max-stable field also has a direct random-utility
interpretation. Define
\[
  \varepsilon_v=\alpha\log\eta_v,
  \qquad v\in\T.
\]
When the coordinates of \(Z\) are strictly positive,
\eqref{4} gives
\begin{equation}
  \varepsilon_v
  =
  \bigvee_{i\geq1}
  \left\{
    \mathcal Q_i+Y_v^{(i)}
  \right\},
  \qquad
  \mathcal Q_i=\alpha\log P_i,
  \quad
  Y_v^{(i)}=\alpha\log Z_v^{(i)}.
  \label{5}
\end{equation}
Thus \(\varepsilon_v\) is the highest score among a Poisson collection
of latent utility profiles. The same Poisson atom may dominate at
several vertices and therefore represents a common extreme shock
propagating through the hierarchy.
 
Since the margins of \(\eta\) are unit \(\alpha\)-Fr\'echet,
each \(\varepsilon_v\) has the standard Gumbel distribution. For every
non-empty finite \(K\subset\T\), define
\[
  \ell_K(x)
  =
  \E*{\max_{v\in K}x_vZ_v^\alpha},
  \qquad
  x=(x_v)_{v\in K}\in[0,\infty)^K.
\]
Then
\[
  \Pp\{\varepsilon_v\leq y_v,\ v\in K\}
  =
  \exp\left\{
    -\ell_K\bigl((e^{-y_v})_{v\in K}\bigr)
  \right\}.
\]
Consequently, for deterministic systematic utilities \(V_v\), the
utilities
\[
  \mathcal U_v=V_v+\varepsilon_v,
  \qquad v\in K,
\]
form a generalised extreme-value random-utility model; see
\cite{McFadden1981,ResnickRoy1991,Dagsvik1994}. When
\(\mathcal A\) is finite, the restriction of \(\varepsilon\) to a
generation \(\mathcal A^n\) provides a finite system of dependent
Gumbel utility shocks for hierarchically related alternatives. For a
countable alphabet, the same interpretation applies to finite choice
subsets of a generation.
 
An illuminating example is provided by a Gaussian branching random
walk. Let \((X_e)\) be independent centred Gaussian random variables
with variance \(\Delta>0\), indexed by the directed edges of the
tree. Write \([\varnothing,v]\) for the collection of edges on the
unique path from the root to \(v\), and define
\[
  W_\varnothing=0,
  \qquad
  W_v=\sum_{e\in[\varnothing,v]}X_e,
  \qquad v\neq\varnothing.
\]
For \(\alpha=1\), put
\[
  Z_v
  =
  \exp\left\{
    W_v-\frac{\Delta|v|}{2}
  \right\},
  \qquad v\in\T.
\]
Then \(\E{Z_v}=1\). Moreover, for every \(u\in\T\),
\[
  Z_{uv}
  =
  Z_u\widetilde Z_v^{(u)},
  \qquad v\in\T,
\]
where \(\widetilde Z^{(u)}\) is a copy of \(Z\) independent of
\(Z_u\). Hence, for every non-empty finite \(K\subset\T\),
\[
  \E*{\max_{v\in K}\frac{Z_{uv}}{x_v}}
  =
  \E{Z_u}\,
  \E*{\max_{v\in K}
       \frac{\widetilde Z_v^{(u)}}{x_v}}
  =
  \E*{\max_{v\in K}\frac{Z_v}{x_v}},
  \qquad
  x_v>0,\quad v\in K.
\]
Thus the corresponding max-stable field is branch-stationary.
 
This leads to our first question.
 
\medskip
 
\noindent
\textbf{Q1.}
\emph{Which lognormal representers generate branch-stationary
max-stable fields on rooted trees, and how can they be characterised?}
 
Of course, if
\[
  B^uZ\eqd Z,
  \qquad u\in\T,
\]
then the corresponding max-stable field is branch-stationary without
any lognormality assumption. In the Euclidean case, however, there are
important non-lognormal representers that are not themselves stationary
but generate stationary max-stable processes on \(\mathbb R^d\) or
\(\mathbb Z^d\); see, e.g.,
\cite{DombryKabluchko2017,DombryHashorvaSoulier2018,
HashorvaKume2021,Hashorva2024}. Such representers are commonly called
Brown--Resnick stationary spectral processes.
 
One direct way to construct Brown--Resnick stationary representers is
through integrable cluster fields and random-shift representations; see,
e.g.,
\cite{DombryKabluchko2017,DombryHashorvaSoulier2018,Hashorva2025}.
These constructions rely on the group structure of the underlying
index set. In the present setting the rooted tree
\(\T=\mathcal A^*\) is only a monoid, and the descendant shifts
\(B^u\) are not invertible on \(\T\).
 
Our second question is therefore:
 
\medskip
 
\noindent
\textbf{Q2.}
\emph{Can we construct broad classes of non-lognormal representers,
including representers that are not themselves branch-stationary, which
generate branch-stationary max-stable fields on rooted trees?}
 
One important characterisation of a Brown--Resnick
\(\alpha\)-spectral process indexed for simplicity by \(\mathbb R\)
is the tilt-shift identity
\begin{equation}
  \E*{
    Z_h^\alpha F(B^hZ)
  }
  =
  \E*{
    Z_0^\alpha F(Z)
  },
  \qquad h\in\mathbb R
  \label{6}
\end{equation}
valid for all non-negative measurable \(0\)-homogeneous functionals
\(F:[0,\infty)^{\mathbb R}\to[0,\infty]\) satisfying
\(F(\zero)=0\), where
\[
  B^hf(t)=f(t+h).
\]
This identity was established in \cite{Htilt}. For lognormal
representers it agrees with the time-change formula for spectral tail
processes in \cite{BojanS}.

 As shown therein, for instance if $Z_v$ is log-normal, the identity \eqref{6}   agrees with the time-change formula of \cite{BojanS} for spectral tail processes. This is not a coincidence, since the corresponding max-stable process $\eta(t),t\in \mathbb R$ is a multivariate regularly varying process, see \cite{BladtHashorvaShevchenko2022,resnick2024art},  and in  view of \cite{BladtHashorvaShevchenko2022} the corresponding homogeneous tail measure can be represented as 
\begin{equation}
  \nu_Z[F]
  :=
  \int F(f)\,\nu_Z(\dd f)
  =
  \int_0^\infty
  \E{F(rZ)}
  \alpha r^{-\alpha-1}\dd r,
  \label{7}
\end{equation}
defined for all non-negative measurable functionals \(F\); see
\cite{BladtHashorvaShevchenko2022,resnick2024art}. By
\cite{DombryHashorvaSoulier2018}, stationarity of the corresponding
max-stable process is equivalent to invariance of \(\nu_Z\) under the
shifts \(B^h\), \(h\in\mathbb R\).
 
Tilting \(Z/Z_h\) by \(Z_h^\alpha\), or equivalently using the tail
measure \(\nu_Z\), gives local spectral tail processes
\(\Theta^{[h]}\), \(h\in\mathbb R\). In the Euclidean group setting,
invertibility of the shifts allows the entire family to be encoded by
a single spectral tail process through the usual time-change formula.
 
The third natural question is therefore:
 
\medskip
 
\noindent
\textbf{Q3.}
\emph{How can branch stationarity be characterised in terms of the tail
measure \(\nu_Z\) and the local spectral tail fields, and under what
conditions does a single spectral tail field determine the whole family?}
 
The extreme-value copulas of the finite-dimensional distributions in
\eqref{3} are uniquely given by
\[
  C_K(u)
  =
  \exp\{-\ell_K(-\log u)\},
  \qquad u\in(0,1]^K.
\]
Branch stationarity is equivalent to common-prefix invariance
\[
  \ell_{aK}(x^a)=\ell_K(x)
\]
for every \(a\in\T\), every non-empty finite \(K\subset\T\), and every
\(x\in[0,\infty)^K\), where
\[
  aK=\{av:v\in K\},
  \qquad
  x^a_{av}=x_v.
\]
It is therefore also equivalent to the corresponding invariance of the
copulas \(C_K\). We represent \(\ell_K\) through the local spectral
tail fields. For every completely monotone Archimedean generator
\(\psi\), this yields the projectively consistent Archimax copulas
\[
  C_K^\psi(u)
  =
  \psi\left(
    \ell_K\bigl(\psi^{-1}(u)\bigr)
  \right),
\]
where \(\psi^{-1}\) acts componentwise. The branch
max-autoregressive construction below has bivariate Marshall--Olkin
copulas, while the free-group cluster construction gives an explicit
class of branch-stationary extreme-value and Archimax copulas.
 
The paper is organised as follows.
\Cref{8} characterises branch-BR stationarity in terms of
homogeneous classes, punctured tail measures, and local spectral tail
fields. Under the root no-hidden-zero condition, the entire family of
local spectral tail fields is recovered from a single root field. The
same section establishes the common-prefix variogram criterion for
lognormal representers and gives Gaussian, max-autoregressive, and
regenerative cascade examples.
\Cref{38} studies forward summability and
constructs gauge-independent branch-invariant tail measures from
integrable clusters on the free-group completion of the tree, including
explicit radial models.
\Cref{56} develops the associated extreme-value and
Archimax copulas, their winner-function representation, and the
positive-stable extension. The proofs are collected in the final
section.

 \section{Characterisation and Examples of Branch-BR stationarity}
\label{8}
 
As in the Introduction, an
{\it \(\alpha\)-spectral representer} \(Z\) is non-negative, satisfies
\(\E{Z_v^\alpha}=1\) for every \(v\in\T\), and obeys
\eqref{2}. Its corresponding max-stable field will
be denoted by \(\eta\).
 
In order to study the branch-BR stationarity of \(Z\), we introduce
some additional notation. Put \(\F=[0,\infty)^\T\) and equip it with
the product \(\sigma\)-field \(\mathcal B(\F)\).
 
Hereafter \(\Hh\) is the class of non-negative measurable functionals
\(F:\F\to[0,\infty]\), and \(\Hh_\beta\), \(\beta\in[0,\infty)\), is
the subclass of \(F\in\Hh\) such that \(F(\zero)=0\) and \(F\) is
\(\beta\)-homogeneous, i.e.,
\[
  F(cf)=c^\beta F(f),
  \qquad f\in\F,\quad c>0,
\]
where \(\zero=(0)_{v\in\T}\) denotes the identically zero field, the
zero element of \(\F\).
 
A distinction from the usual group setting is that a branch shift may
send a non-zero field to the zero field, and therefore \(B^aZ\) need
not be non-zero almost surely. Indeed, \(B^a\) retains only the
coordinates in the descendant subtree \(a\T\), so it is possible that
\(Z\neq\zero\) while \(B^aZ=\zero\). Consequently,
\eqref{2} does not imply
\(\Pp\{B^aZ\neq\zero\}=1\).
 
For two non-negative fields \(X,Y\), motivated by
\cite{molchanov2014invariance,HashorvaKume2021}, write
\(X\sim_\alpha Y\) when
\[
  \E{F(X)}=\E{F(Y)},
  \qquad F\in\Hh_\alpha.
\]
Let
\[
  \F^\circ=\F\setminus\{\zero\}
\]
equipped with the trace \(\sigma\)-field
\(\mathcal B(\F^\circ)\). For a non-negative random field
\(X\) on \(\T\), define its radial tail measure on \(\F^\circ\) by
\begin{equation}
  \nu_X^\circ(A)
  =
  \int_0^\infty
  \Pp\{rX\in A\}
  \alpha r^{-\alpha-1}\dd r,
  \qquad
  A\in\mathcal B(\F^\circ).
  \label{9}
\end{equation}
The puncturing is essential only when \(X\) may equal the zero field
with positive probability. Indeed, if one applies the radial formula
on all of \(\F\), then
\[
  \int_0^\infty
  \Pp\{rX=\zero\}
  \alpha r^{-\alpha-1}\dd r
  =
  \begin{cases}
    0, & \Pp\{X=\zero\}=0,\\
    \infty, & \Pp\{X=\zero\}>0.
  \end{cases}
\]
Hence, if \(\Pp\{X=\zero\}>0\), the unpunctured radial measure has an
infinite atom at \(\zero\) and is not \(\sigma\)-finite on \(\F\).
For the standing representer \(Z\), condition
\eqref{2} implies that
\(\nu_Z^\circ\) is simply the restriction of \(\nu_Z\) to
\(\F^\circ\). By contrast, \(B^aZ\) may equal \(\zero\) with positive
probability and hence its relevant tail measure is
\(\nu_{B^aZ}^\circ\). This punctured measure is nevertheless
\(\sigma\)-finite, since
\[
  \nu_{B^aZ}^\circ\{f:f_v>1/m\}
  =
  m^\alpha\E{Z_{av}^\alpha}
  =
  m^\alpha,
  \qquad
  v\in\T,\quad m\geq1
\]
and these sets form a countable cover of \(\F^\circ\).
 
\begin{lemma}
\label{10}
For any two non-negative random fields \(X\) and \(Y\) on \(\T\)
\[
  X\sim_\alpha Y
  \quad\Longleftrightarrow\quad
  \nu_X^\circ=\nu_Y^\circ.
\]
\end{lemma}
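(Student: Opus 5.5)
The plan is to pass between expectations of \(\alpha\)-homogeneous functionals and integrals against the punctured radial measure, and then to use a \(\pi\)-\(\lambda\) (monotone class) argument on \(\F^\circ\).

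\emph{Step 1 (integration against \(\nu_X^\circ\)).} For \(F\in\Hh_\alpha\), I would use Fubini in \eqref{9} and homogeneity to compute
\[
  \int_{\F^\circ}F\,\dd\nu_X^\circ
  =\int_0^\infty \E{F(rX)\one\{X\neq\zero\}}\alpha r^{-\alpha-1}\dd r .
\]
This is not yet useful, since \(\int_0^\infty r^{\alpha}\alpha r^{-\alpha-1}\dd r=\infty\). The fix is a normalising weight. Fix a strictly positive summable sequence \(w_v\), \(v\in\T\), with \(\sum_v w_v=1\), and set \(N(f)=\sum_v w_v f_v^\alpha\in[0,\infty]\). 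Since \(\E{X_v^\alpha}\) need not be finite for arbitrary \(X\), I would instead use the bounded \(0\)-homogeneous surrogate \(\rho(f)=\sup_v w_v f_v\), or \(\rho(f)=\sum_v w_v\min(1,f_v)\) made homogeneous via \(\sup\). The key property is that \(\rho\) is \(1\)-homogeneous, measurable, and satisfies \(0<\rho(f)<\infty\) on the set \(\{f\in\F^\circ:\rho(f)<\infty\}\).

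\emph{Step 2 (\(\Leftarrow\)).} Assume \(\nu_X^\circ=\nu_Y^\circ\). For \(F\in\Hh_\alpha\), define
\[
  G(f)=F(f)\,\rho(f)^{-\alpha}\one\{1<\rho(f)\le e\}
\]
on \(\F^\circ\), with \(\rho(f)=\sup_v w_vf_v\). If \(\rho(X)=\infty\) with positive probability, I would replace \(w\) by a weight making \(\rho(X)\) finite for both fields; if this is not possible I would cut along \(\{\rho=\infty\}\) separately, where the same scaling argument applies. Using Fubini and the substitution \(s=r\rho(X)\), one gets
\[
  \int G\,\dd\nu_X^\circ=\E{F(X)\rho(X)^{-\alpha}\rho(X)^{\alpha}}\int_1^e s^{-1}\alpha\,\dd s\cdot\alpha^{-1}=\E{F(X)}
\]
up to the constant \(\int_1^e s^{-1}\dd s=1\), on the event \(\{X\neq\zero\}\). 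On \(\{X=\zero\}\) we have \(F(X)=0\), so the formula is \(\E{F(X)}\). The same identity holds for \(Y\), so \(X\sim_\alpha Y\).

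\emph{Step 3 (\(\Rightarrow\)).} Assume \(X\sim_\alpha Y\). For measurable \(A\subset\F^\circ\) with \(\rho<\infty\), the set function \(A\mapsto\nu_X^\circ(A)\) equals \(\E{F_A(X)}\), where
\[
  F_A(f)=\rho(f)^\alpha\int_0^\infty \one\{r f\in A\}\alpha r^{-\alpha-1}\dd r ,
\]
after substituting \(r\mapsto r/\rho(f)\). This functional is \(\alpha\)-homogeneous, since the inner integral scales as \(c^{\alpha}\) when \(f\mapsto cf\), and the factor \(\rho^\alpha\) can in fact be dropped. It is measurable by Fubini and vanishes at \(\zero\) because \(\zero\notin A\). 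Hence \(F_A\in\Hh_\alpha\), and directly \(\nu_X^\circ(A)=\E{F_A(X)}=\E{F_A(Y)}=\nu_Y^\circ(A)\). No \(\pi\)-\(\lambda\) argument is even needed, since \(F_A(f)=\int_0^\infty\one\{rf\in A\}\alpha r^{-\alpha-1}\dd r\) is itself \(\alpha\)-homogeneous.

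\emph{Main obstacle.} The main obstacle is Step 2: building a measurable \(1\)-homogeneous gauge \(\rho\) that is finite and positive on all of \(\F^\circ\) (e.g. \(\rho(f)=\sum_v w_v\min(f_v,1)\) is not homogeneous). I would handle it by using \(\rho(f)=\sup_v w_v f_v\) together with a partition of \(\F^\circ\) into \(\{\rho<\infty\}\) and \(\{\rho=\infty\}\), both of which are cones. On \(\{\rho=\infty\}\), I would instead use the cone decomposition by the first index \(v\) with \(f_v>0\) and the gauge \(f_v\) there. This gives countably many disjoint cones \(C_v=\{f_u=0,\ u<v;\ f_v>0\}\), with \(\rho_v(f)=f_v\) positive and finite on \(C_v\). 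Applying Step 2 with \(G_v(f)=F(f)f_v^{-\alpha}\one_{C_v}(f)\one\{1<f_v\le e\}\) and summing over \(v\) recovers \(\E{F(X)}\) without any finiteness issue. This enumeration-based gauge is what I would actually use.
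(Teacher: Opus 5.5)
Your proposal is correct. For ($\Rightarrow$) it is the paper's argument. For ($\Leftarrow$) it takes a different and heavier route.

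\textbf{The ($\Rightarrow$) direction.} Your Step 3 is exactly the paper's proof. The functional $F_A(f)=\int_0^\infty\one\{rf\in A\}\alpha r^{-\alpha-1}\dd r$ is the paper's $K_A$. It lies in $\Hh_\alpha$, and $\E{F_A(X)}=\nu_X^\circ(A)$ by Tonelli. The preliminary factor $\rho(f)^\alpha$ would make $F_A$ $2\alpha$-homogeneous and give the wrong value, so it must be deleted, as you note. The restriction to $\{\rho<\infty\}$ should also be deleted.

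\textbf{The ($\Leftarrow$) direction.} Your route works:
\begin{itemize}
\item You partition $\F^\circ$ into the cones $C_v$ of fields whose first nonzero coordinate, in a fixed enumeration of $\T$, is $v$.
\item On each cone you normalise by $f_v$ and integrate over a radial shell.
\item With $G_v$ as in your last paragraph, $\int G_v\,\dd\nu_X^\circ=(1-e^{-\alpha})\E{F(X)\one_{C_v}(X)}$.
\item The shell constant is therefore $1-e^{-\alpha}$, not the $1$ you claimed. This is harmless, since it does not depend on the field.
\end{itemize}
What your route buys is an explicit angular/radial disintegration of $\nu_X^\circ$, in the spirit of \eqref{73} used later for the local spectral tail fields.

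The paper needs no gauge at all. For $G\in\Hh_\alpha$ it evaluates $\nu_X^\circ$ on the single set $A_G=\{G>1\}$. Using $\int_0^\infty\one\{r^\alpha t>1\}\alpha r^{-\alpha-1}\dd r=t$ for $t\in[0,\infty]$, it gets $\nu_X^\circ(A_G)=\E{G(X)}$ in one line. In your own notation this is simply the identity $F_{A_G}=G$. So your Step 3 already contained the observation that removes your ``main obstacle''. The weighted-sup gauge, the case $\{\rho=\infty\}$, and the enumeration can all be dropped.
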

 
The tail measure of the standing \(\alpha\)-spectral representer \(Z\),
defined through \eqref{7}, satisfies
\[
  \nu_Z\{\zero\}=0,
  \qquad
  \nu_Z\{f:f_v>1\}
  =
  \E{Z_v^\alpha}
  =
  1,
  \qquad v\in\T.
\]
 
 
For \(u\in\T\), define the local spectral tail field
\(\Theta^{[u]}\) at \(u\) by
\begin{equation}
  \E{H(\Theta^{[u]})}
  =
  \E*{
    \one_{\{Z_u>0\}}
    Z_u^\alpha
    H\left(\frac{Z}{Z_u}\right)
  }, \qquad\forall H\in\Hh.
  \label{11}
\end{equation}  The integrand in
\eqref{11} is defined to be zero on
\(\{Z_u=0\}\). Throughout, \(0\cdot\infty\) in such non-negative
tilted expressions is interpreted as zero. Since
\(\E{Z_u^\alpha}=1\), this defines a probability law, and
\[
  \Theta_u^{[u]}=1
  \qquad\text{almost surely}.
\]
 
The relation between two local spectral tail fields does not require
branch stationarity. In the present normalization, the general
change-of-root formula reads
\begin{equation}
  \E*{
    \one_{\{\Theta_t^{[h]}>0\}}
    (\Theta_t^{[h]})^\alpha
    H\left(
      \frac{\Theta^{[h]}}{\Theta_t^{[h]}}
    \right)
  }
  =
  \E*{
    \one_{\{\Theta_h^{[t]}>0\}}
    H(\Theta^{[t]})
  },
  \qquad
  h,t\in\T,\quad H\in\Hh.
  \label{12}
\end{equation}
Each ratio is evaluated only on the event indicated by the
corresponding indicator, and the integrand is defined to be zero
outside that event. This is the coordinate version of the
change-of-root identity in
\cite[Proposition~3.6]{BladtHashorvaShevchenko2022}; see also
\cite{MR4153591}.
 
\begin{proposition}
\label{13}
Let \(Z\) be an \(\alpha\)-spectral representer satisfying
\eqref{2}, and let \(\eta\) be given by
\eqref{3}. The following statements are
equivalent:
\begin{enumerate}[(i)]
\item \(\eta\) is branch-stationary;
\item \(B^aZ\sim_\alpha Z\) for every \(a\in\T\);
\item the punctured tail measure is branch-invariant, i.e.,
\begin{equation}
  \nu_Z^\circ\bigl((B^a)^{-1}A\bigr)
  =
  \nu_Z^\circ(A),
  \qquad
  A\in\mathcal B(\F^\circ),\quad a\in\T,
  \label{14}
\end{equation}
where
\[
  (B^a)^{-1}A
  =
  \{f\in\F^\circ:B^af\in A\};
\]
\item the local spectral tail fields satisfy
\begin{equation}
  B^a\Theta^{[au]}\eqd\Theta^{[u]},
  \qquad a,u\in\T;
  \label{15}
\end{equation}
\item for every \(a,u\in\T\) and every \(F\in\Hh_0\),
\begin{equation}
  \E*{
    Z_{au}^{\alpha}F(B^aZ)
  }
  =
  \E*{
    Z_u^{\alpha}F(Z)
  }.
  \label{16}
\end{equation}
\end{enumerate}
\end{proposition}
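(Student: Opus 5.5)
The plan is to prove the cycle (i)\(\Leftrightarrow\)(ii)\(\Leftrightarrow\)(iii) first, and then attach (v) and (iv) to (ii).

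\textbf{(i)\(\Leftrightarrow\)(ii).} By \eqref{3}, the finite-dimensional laws of \(B^a\eta\) are those of the max-stable field with representer \(B^aZ\). This remains true when \(B^aZ\) may vanish, since \(\E{(Z_{av})^\alpha}=1\) and the exponent formula only involves \(\E{\max_{v\in K} Z_{av}^\alpha x_v^{-\alpha}}\). So (i) is equivalent to
\[
\E*{\max_{v\in K}x_v^{-\alpha}Z_{av}^\alpha}=\E*{\max_{v\in K}x_v^{-\alpha}Z_v^\alpha}
\]
for all finite \(K\) and all \(x\). These functionals lie in \(\Hh_\alpha\), so (ii)\(\Rightarrow\)(i) is immediate. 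For the converse:
\begin{itemize}[leftmargin=*]
\item The displayed equalities determine the exponent measures, i.e.\ the restrictions of \(\nu^\circ_{B^aZ}\) and \(\nu^\circ_Z\) to each finite-dimensional cone \([0,\infty)^K\setminus\{0\}\). This uses the standard fact that the stable tail dependence function determines the exponent measure on \([0,\infty)^K\setminus\{0\}\).
\item These finite-dimensional restrictions are consistent and \(\sigma\)-finite, thanks to the cover \(\{f_v>1/m\}\).
\item A \(\pi\)–\(\lambda\) argument on cylinder sets intersected with \(\{f_v>1/m\}\) then gives \(\nu^\circ_{B^aZ}=\nu^\circ_Z\) on \(\F^\circ\).
\item \Cref{10} converts this into \(B^aZ\sim_\alpha Z\).
\end{itemize}
The same argument, read in reverse, shows that (ii) and the equality \(\nu^\circ_{B^aZ}=\nu^\circ_Z\) are equivalent.

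\textbf{(ii)\(\Leftrightarrow\)(iii).} Since \((B^a)^{-1}\) acts on radial sets, \eqref{9} gives, for \(A\in\mathcal B(\F^\circ)\),
\[
\nu^\circ_Z\bigl((B^a)^{-1}A\bigr)=\int_0^\infty\Pp\{rB^aZ\in A\}\,\alpha r^{-\alpha-1}\dd r=\nu^\circ_{B^aZ}(A).
\]
Here \(Z\ne\zero\) almost surely by \eqref{2}, and \(A\not\ni\zero\), so the points where \(B^aZ=\zero\) contribute nothing on either side. Hence \eqref{14} is exactly \(\nu^\circ_{B^aZ}=\nu^\circ_Z\), which \Cref{10} identifies with (ii).

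\textbf{(ii)\(\Leftrightarrow\)(v).} For \(F\in\Hh_0\), the functional \(G(f)=f_u^\alpha F(f)\) lies in \(\Hh_\alpha\): it vanishes at \(\zero\) because \(f_u=0\) there, with the convention \(0\cdot\infty=0\). Applying (ii) to \(G\) gives \eqref{16} directly.

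Conversely, let \(G\in\Hh_\alpha\). I will decompose \(G\) along the coordinates. Enumerate \(\T=\{u_1,u_2,\dots\}\) and set \(D_k=\{f_{u_k}>0,\ f_{u_j}=0 \text{ for } j<k\}\). These sets are \(0\)-homogeneous, partition \(\F^\circ\), and satisfy \(B^aD_k\)-compatibility in the following sense: evaluating on \(B^af\) just means applying the same indicator to the shifted field. Then
\[
G(f)=\sum_k \one_{D_k}(f)\, f_{u_k}^\alpha\, G(f/f_{u_k}),
\]
and each summand has the form \(f_{u_k}^\alpha F_k(f)\) with \(F_k\in\Hh_0\). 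Evaluating at \(f=B^aZ\) gives \(f_{u_k}=Z_{au_k}\), so \eqref{16} applied termwise and summed by monotone convergence yields \(\E{G(B^aZ)}=\E{G(Z)}\). The zero set contributes nothing because \(G(\zero)=0\).

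\textbf{(iv)\(\Leftrightarrow\)(v).} Unfold \eqref{11}:
\[
\E{H(B^a\Theta^{[au]})}=\E*{\one_{\{Z_{au}>0\}}Z_{au}^\alpha H\bigl(B^aZ/Z_{au}\bigr)}.
\]
Put \(F(f)=\one_{\{f_u>0\}}H(f/f_u)\), which lies in \(\Hh_0\) with \(F(\zero)=0\). Then \eqref{15} for all \(H\in\Hh\) is exactly \eqref{16} for all \(F\) of this form.

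For the converse I must show that these \(F\) suffice. Any \(F\in\Hh_0\), multiplied by \(f_u^\alpha\), only matters on \(\{f_u>0\}\), and there \(F(f)=F(f/f_u)\) by \(0\)-homogeneity. So every \(F\) relevant to \eqref{16} is of the form above with \(H=F\).

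\textbf{Main obstacle.} I expect the finite-dimensional-to-full identification in (i)\(\Rightarrow\)(ii) to be the main point. It needs the uniqueness of the exponent measure given \(\ell_K\), together with measure extension on the non-product-friendly space \(\F^\circ\). The step to handle carefully is the \(\sigma\)-finiteness of \(\nu^\circ_{B^aZ}\), which the paper has already provided, since that is what makes the \(\pi\)–\(\lambda\) argument legitimate.
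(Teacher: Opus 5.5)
Your proof is correct. The \((i)\Leftrightarrow(ii)\Leftrightarrow(iii)\) part and the \((iv)\Leftrightarrow(v)\) part are essentially the paper's arguments. The paper does the finite-dimensional identification by inclusion--exclusion on exceedance rectangles rather than by citing uniqueness of finite-dimensional exponent measures, but the content is the same. One wording fix: by ``restrictions'' to \([0,\infty)^K\setminus\{0\}\) you mean images under the coordinate projection. Your remark about ``\(B^aD_k\)-compatibility'' is superfluous, since \(F_k\) is a fixed functional that you simply evaluate at \(B^aZ\).

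The genuine difference is how you attach the local conditions to the global ones.

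\emph{The paper's route.} It proves \((iii)\Leftrightarrow(iv)\) at the level of measures. It uses the radial disintegration \eqref{73}--\eqref{74}, the identification \(\Theta^{[u]}_{B^aZ}\eqd B^a\Theta^{[au]}\), and disjointification of the cover \(E_u=\{f_u>0\}\).

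\emph{Your route.} You prove \((ii)\Leftrightarrow(v)\) at the level of homogeneous functionals. One direction is the substitution \(G(f)=f_u^\alpha F(f)\). For the other, the first-nonzero-coordinate partition \(D_k\) gives \(G(f)=\sum_k f_{u_k}^\alpha F_k(f)\) with \(F_k\in\Hh_0\).

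Your \(D_k\) is exactly the paper's disjointified cover, so this is the same underlying idea moved from measures to functionals.

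\emph{What each buys.} Your version needs neither the radial change of variables nor the identification of the local field of \(B^aZ\). It also makes explicit that every \(G\in\Hh_\alpha\) is a countable sum of functionals \(f_u^\alpha F(f)\) with \(F\in\Hh_0\), with no root no-hidden-zero assumption needed. The paper's version produces formula \eqref{74}, which expresses the law of each local field \(\Theta^{[u]}\) through the punctured tail measure. That lets \eqref{15} be read off directly from \eqref{14}, and it gives the local spectral tail fields a measure-theoretic meaning used implicitly later.
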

 
It is useful to distinguish the standing non-zero condition
\eqref{2} from the root no-hidden-zero condition
\begin{equation}
  \Pp\{Z_\varnothing=0,\ Z_v>0\}=0,
  \qquad v\in\T.
  \label{17}
\end{equation}
Under \eqref{2}, the countability of \(\T\)
makes \eqref{17} equivalent to
\[
  \Pp\{Z_\varnothing>0\}=1.
\]
Indeed, on \(\{Z_\varnothing=0\}\), condition
\eqref{2} implies that \(Z_v>0\) for at least one
\(v\in\T\) almost surely. In particular, \eqref{17} is
automatically satisfied by lognormal spectral processes.
 
Assume that \(Z\) satisfies \eqref{17} and put
\[
  \Theta=\Theta^{[\varnothing]}.
\]
For every \(F\in\Hh_\alpha\), the definition of the root tilt and
\(\alpha\)-homogeneity imply
\[
  \E{F(\Theta)}
  =
  \E*{
    Z_\varnothing^\alpha
    F\left(\frac{Z}{Z_\varnothing}\right)
  }
  =
  \E{F(Z)}
\] and thus 
\begin{equation}
  \Theta\sim_\alpha Z,
  \qquad
  \nu_\Theta^\circ=\nu_Z^\circ.
  \label{18}
\end{equation}
In particular, \(\Theta\) is an \(\alpha\)-spectral representer of the
same max-stable field as \(Z\). Furthermore, for every \(t\in\T\)
\[
\begin{aligned}
  \Pp\{\Theta_\varnothing^{[t]}=0\}
  &=
  \E*{
    \one_{\{Z_t>0\}}
    Z_t^\alpha
    \one_{\{Z_\varnothing/Z_t=0\}}
  } =
  \E*{
    Z_t^\alpha\one_{\{Z_\varnothing=0\}}
  }
  =
  0.
\end{aligned}
\]
Hence the change-of-root formula with \(h=\varnothing\) reduces to
\begin{equation}
  \E{H(\Theta^{[t]})}
  =
  \E*{
    \one_{\{\Theta_t>0\}}
    \Theta_t^\alpha
    H\left(\frac{\Theta}{\Theta_t}\right)
  },
  \qquad
  t\in\T,\quad H\in\Hh.
  \label{19}
\end{equation}
 
\begin{corollary} 
\label{20}
Let \(Z\) be an \(\alpha\)-spectral representer satisfying
\eqref{2} and \eqref{17}, and put
\(\Theta=\Theta^{[\varnothing]}\). Then \(Z\) is branch-BR stationary
if and only if
\begin{equation}
  \E*{
    \one_{\{\Theta_a>0\}}
    \Theta_a^\alpha
    H\left(\frac{B^a\Theta}{\Theta_a}\right)
  }
  =
  \E{H(\Theta)},
  \qquad
  a\in\T,\quad H\in\Hh.
  \label{21}
\end{equation}
Under these equivalent conditions
\begin{equation}
  \Pp\{\Theta_u=0,\ \Theta_{uv}>0\}
  =
  \Pp\{Z_u=0,\ Z_{uv}>0\}
  =
  0,
  \qquad u,v\in\T.
  \label{22}
\end{equation}
\end{corollary}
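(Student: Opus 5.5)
The plan is to reduce \eqref{21} to condition (iv) of \cref{13}, using \eqref{19} to express every local field \(\Theta^{[u]}\) through the single root field \(\Theta\). We apply \eqref{19} with \(t=a\) and the functional \(H\circ B^a\), which is again in \(\Hh\). This gives
\[
  \E{H(B^a\Theta^{[a]})}
  =
  \E*{\one_{\{\Theta_a>0\}}\Theta_a^\alpha H\left(\frac{B^a\Theta}{\Theta_a}\right)}.
\]
So the left-hand side of \eqref{21} is exactly \(\E{H(B^a\Theta^{[a]})}\), and \eqref{21} is equivalent to \(B^a\Theta^{[a]}\eqd\Theta^{[\varnothing]}\) for all \(a\). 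This is \eqref{15} with \(u=\varnothing\).

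Necessity is then immediate from (i)\(\Rightarrow\)(iv) of \cref{13}. For sufficiency we must upgrade the case \(u=\varnothing\) of \eqref{15} to all \(u\). The cleanest route is to show that \(u=\varnothing\) already yields (ii), i.e.\ \(B^aZ\sim_\alpha Z\). Take \(F\in\Hh_\alpha\). By \eqref{18} we have \(\E{F(Z)}=\E{F(\Theta)}\). We then compute \(\E{F(B^aZ)}\) by splitting on \(\{Z_a>0\}\) and \(\{Z_a=0\}\). On \(\{Z_a>0\}\), homogeneity gives \(F(B^aZ)=Z_a^\alpha F(B^aZ/Z_a)\), so that part equals \(\E{F(B^a\Theta^{[a]})}=\E{F(\Theta)}\) by hypothesis. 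Hence (ii) holds provided the contribution from \(\{Z_a=0,\ B^aZ\neq\zero\}\) vanishes, and \cref{13} then gives branch-BR stationarity.

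The main obstacle is this hidden-zero term. We expect to deduce \(\Pp\{Z_a=0,\ Z_{av}>0\}=0\) from the hypothesis itself, and this is also \eqref{22}. Since \(\Theta^{[\varnothing]}_\varnothing=1\) almost surely, the identity \(B^a\Theta^{[a]}\eqd\Theta\) gives \(\Theta^{[a]}_a=1\), which is automatic and carries no information. The useful input is instead the following. By \eqref{19} with \(t=av\), and using \(\Theta^{[av]}_{av}=1\),
\[
  1=\Pp\{\Theta^{[av]}_{av}>0\}=\E{\Theta_{av}^\alpha}=\E*{Z_\varnothing^\alpha (Z_{av}/Z_\varnothing)^\alpha}=\E{Z_{av}^\alpha}.
\]
Then compare with
\[
  \E{\one_{\{\Theta_a>0\}}\Theta_{av}^\alpha}
  =
  \E{\one_{\{\Theta_a>0\}}\Theta_a^\alpha (\Theta_{av}/\Theta_a)^\alpha},
\]
which, by \eqref{21} with \(H(f)=f_v^\alpha\), equals \(\E{\Theta_v^\alpha}=1\). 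Subtracting, \(\E{\one_{\{\Theta_a=0\}}\Theta_{av}^\alpha}=0\), so \(\Pp\{\Theta_a=0,\ \Theta_{av}>0\}=0\). Translating back through the root tilt, which is valid since \(Z_\varnothing>0\) almost surely by \eqref{17}, gives \(\E{Z_{av}^\alpha\one_{\{Z_a=0\}}}=0\), hence \(\Pp\{Z_a=0,\ Z_{av}>0\}=0\). This establishes both the missing step and \eqref{22}, with \(u=a\).

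For the necessity direction, \eqref{22} follows from the same computation. We use the tilt identity \eqref{16} with the bounded \(0\)-homogeneous functional \(F(f)=\one_{\{f_\varnothing=0\}}\), which gives \(\E{Z_{uv}^\alpha\one_{\{Z_u=0\}}}=\E{Z_v^\alpha\one_{\{Z_\varnothing=0\}}}=0\) by \eqref{17}.
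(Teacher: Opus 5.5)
Your proposal is correct and is essentially the paper's proof. It uses the same reduction of \eqref{21} to \(B^a\Theta^{[a]}\eqd\Theta\) via \eqref{19}, and the same hidden-zero step comparing \eqref{21} at \(H(f)=f_v^\alpha\) with \(\E{\Theta_{av}^\alpha}=1\); the only variation is that you transfer the hidden-zero property to \(Z\) through the root tilt and conclude \(B^aZ\sim_\alpha Z\) directly, whereas the paper concludes \(B^a\Theta\sim_\alpha\Theta\) and applies \cref{13} to \(\Theta\) as a representer of the same field. Your separate necessity argument for \eqref{22} is redundant, since once \eqref{21} holds your sufficiency computation already yields \eqref{22}; as written, \(\one_{\{f_\varnothing=0\}}\) is also not in \(\Hh_0\) because it equals \(1\) at \(\zero\), though replacing it by \(\one_{\{f_\varnothing=0,\ f_v>0\}}\) fixes this at no cost.
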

 
\subsection{The lognormal variogram criterion}
 
For notation simplicity, we retain the unit-Fr\'echet normalization
used in the Introduction, i.e., \(\alpha=1\) in this subsection. Let
\(W=(W_v)_{v\in\T}\) be a centered Gaussian field, put
\[
  \sigma_v^2=\operatorname{Var}(W_v),
  \qquad
  \Gamma(v,w)=\E{(W_v-W_w)^2},
\]
and define
\begin{equation}
  Z_v
  =
  \exp\left\{
    W_v-\sigma_v^2/2
  \right\}.
  \label{23}
\end{equation}
By construction, \(Z_v>0\) almost surely and \(\E{Z_v}=1\) for every
\(v\in\T\).
 
Replacing \(W_v\) by \(W_v-W_\varnothing\) leaves \(\Gamma\)
unchanged. Since the tail measure generated by a normalized lognormal
representer is determined by its variogram, as follows from the
Gaussian calculation below, the corresponding anchored representer
generates the same max-stable field. We may therefore use the anchored
version \(W_\varnothing=0\) without loss of generality. Then
\(Z_\varnothing=1\), the root spectral tail field \(\Theta\) has the
same law as \(Z\), and the branch-BR criterion can be expressed
entirely in terms of \(\Theta\).
 
More precisely, for \(u\in\T\), put
\[
  X_v^{(u)}=W_{uv}-W_u,
  \qquad v\in\T.
\]
Gaussian exponential tilting, see e.g., \cite{Htilt} yields
\begin{equation}
  B^u\Theta^{[u]}
  \eqd
  \left(
    \exp\left\{
      X_v^{(u)}
      -\frac12\operatorname{Var}(X_v^{(u)})
    \right\}
  \right)_{v\in\T}.
  \label{24}
\end{equation}
 
\begin{lemma}
\label{25}
For the process in \eqref{23}, the following
statements are equivalent:
\begin{enumerate}[(i)]
\item \(Z\) is branch-BR stationary;
\item \(W\) has branch-stationary increments, i.e.,
\[
  (W_{uv}-W_u)_{v\in\T}
  \eqd
  (W_v-W_\varnothing)_{v\in\T},
  \qquad u\in\T;
\]
\item the variogram is invariant under the addition of a common prefix,
i.e.,
\begin{equation}
  \Gamma(uv,uw)=\Gamma(v,w),
  \qquad u,v,w\in\T.
  \label{26}
\end{equation}
\end{enumerate}
\end{lemma}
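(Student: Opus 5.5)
We prove (ii)\(\Leftrightarrow\)(iii) by Gaussian computation and (i)\(\Leftrightarrow\)(ii) via \Cref{20} together with \eqref{24}. Throughout we work with the anchored version \(W_\varnothing=0\). This is legitimate because \(Z\) and its anchored version generate the same max-stable field, condition (ii) involves only increments, and (iii) involves only \(\Gamma\).

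For (ii)\(\Leftrightarrow\)(iii), both sides of (ii) are centred Gaussian fields, so equality in law amounts to equality of covariances. Centred fields \(X^{(u)}_v=W_{uv}-W_u\) with \(X^{(u)}_\varnothing=0\) have covariance determined by their variogram through the polarisation identity
\[
\operatorname{Cov}(X_v,X_w)=\tfrac12\bigl(\E{X_v^2}+\E{X_w^2}-\E{(X_v-X_w)^2}\bigr).
\]
The increments satisfy
\[
\E{(X^{(u)}_v)^2}=\Gamma(uv,u)=\Gamma(uv,u\varnothing),\qquad \E{(X^{(u)}_v-X^{(u)}_w)^2}=\Gamma(uv,uw).
\]
For \(W\) itself the corresponding quantities are \(\Gamma(v,\varnothing)\) and \(\Gamma(v,w)\). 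Hence (iii) gives equal covariances, and conversely (ii) gives \(\Gamma(uv,uw)=\Gamma(v,w)\).

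For (i)\(\Leftrightarrow\)(ii), condition \eqref{17} holds since \(Z>0\). By anchoring, \(\Theta=\Theta^{[\varnothing]}\eqd Z\), so \(\Theta\) is the lognormal field built from \(W\). The left side of \eqref{21} is exactly \(\E{H(B^a\Theta^{[a]})}\) by \eqref{19} with \(t=a\), because \(B^a(\Theta/\Theta_a)=(B^a\Theta)/\Theta_a\). Thus \Cref{20} says that (i) is equivalent to \(B^a\Theta^{[a]}\eqd\Theta\) for all \(a\). By \eqref{24}, \(B^a\Theta^{[a]}\) is the normalised lognormal field of \(X^{(a)}\), while \(\Theta\) is that of \(W-W_\varnothing\).

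It remains to see that two normalised lognormal fields \(\exp\{X-\tfrac12\operatorname{Var}X\}\) and \(\exp\{Y-\tfrac12\operatorname{Var}Y\}\) coincide in law if and only if \(X\eqd Y\). This holds because the logarithm recovers \(X-\tfrac12\operatorname{Var}X\); that is a Gaussian field with known mean, so its law determines the covariance of \(X\) and hence the law of \(X\). This closes the equivalence.

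The main obstacle is justifying \eqref{24}, which we take as given from the paper. If it had to be checked, we would do so by Cameron–Martin: tilting by \(Z_a=e^{W_a-\sigma_a^2/2}\) shifts the mean of \(W_{av}\) by \(\operatorname{Cov}(W_{av},W_a)\). After division by \(Z_a\) the resulting field is \(\exp\{X^{(a)}_v+\operatorname{Cov}(X^{(a)}_v,W_a)-\tfrac12\sigma_{av}^2+\tfrac12\sigma_a^2\}\). Expanding \(\sigma_{av}^2=\operatorname{Var}(X^{(a)}_v)+2\operatorname{Cov}(X^{(a)}_v,W_a)+\sigma_a^2\) reduces the exponent to \(X^{(a)}_v-\tfrac12\operatorname{Var}(X^{(a)}_v)\), which is the right side of \eqref{24}.
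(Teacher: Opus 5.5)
Your proof is correct and takes essentially the same route as the paper. You get (i)$\Leftrightarrow$(ii) from \Cref{20} and the tilting identity \eqref{24}, then take logarithms of the normalised lognormal fields, and you get (ii)$\Leftrightarrow$(iii) from the polarisation identity for the centred Gaussian increment fields; your Cameron--Martin check of \eqref{24} is a useful addition, since the paper only cites that identity.
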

 
\begin{remark}[Vector-valued lognormal fields]
Let
\[
  W_v=(W_{v,1},\ldots,W_{v,d}),
  \qquad d\in\mathbb N,\quad v\in\T,
\]
be a centered Gaussian field and set
\[
  Z_{v,i}
  =
  \exp\left\{
    W_{v,i}
    -\frac12\operatorname{Var}(W_{v,i})
  \right\},
  \qquad 1\leq i\leq d.
\]
Using common Poisson points with componentwise maxima gives the
associated \(d\)-variate max-stable field. Define the
pseudo-variograms of \(W\) by
\[
  \gamma_{ij}(v,w)
  =
  \E{(W_{v,i}-W_{w,j})^2},
  \qquad 1\leq i,j\leq d.
\]
The associated field is branch-stationary if and only if
\begin{equation}
  \gamma_{ij}(uv,uw)=\gamma_{ij}(v,w),
  \qquad
  u,v,w\in\T,\quad 1\leq i,j\leq d.
  \label{27}
\end{equation} 
\end{remark}
 
\begin{remark}
Let \(q=(q_a)_{a\in\mathcal A}\) satisfy \(q_a>0\). For
\(v,w\in\T\), let \(r=v\wedge w\) be their longest common prefix and
write
\[
  v=ra_1\cdots a_m,
  \qquad
  w=rb_1\cdots b_n.
\]
The letter-weighted genealogical metric
\begin{equation}
  d_q(v,w)
  =
  \sum_{j=1}^m q_{a_j}
  +
  \sum_{j=1}^n q_{b_j}
  \label{28}
\end{equation}
is invariant under common prefixes. It is also conditionally negative
definite. Indeed, if
\((e_{x,a})_{x\in\T,a\in\mathcal A}\) is the canonical orthonormal
basis indexed by the edges and \(v=a_1\cdots a_k\), then
\[
  \iota_q(v)
  =
  \sum_{j=1}^k
  \sqrt{q_{a_j}}\,
  e_{a_1\cdots a_{j-1},a_j}
\]
satisfies
\[
  d_q(v,w)
  =
  \|\iota_q(v)-\iota_q(w)\|_2^2.
\]
For every Bernstein function \(\phi\) with \(\phi(0)=0\), the function
\begin{equation}
  \Gamma(v,w)=\phi\bigl(d_q(v,w)\bigr)
  \label{29}
\end{equation}
is an admissible branch-invariant variogram; see, for example,
\cite{SchillingSongVondracek2012}. Concrete choices include
\[
  \phi(t)=ct^\beta,\quad 0<\beta\leq1,
  \qquad
  \phi(t)=c(1-e^{-\lambda t}),
  \qquad
  \phi(t)=c\log(1+\lambda t),
\]
where \(c,\lambda>0\). The choice \(\phi(t)=t\) gives a Gaussian
branching random walk in which an edge leading to a child labelled
\(a\) has variance \(q_a\). Thus every variogram in
\eqref{29} yields a branch-BR lognormal
spectral field by \Cref{25}. This extends the
usual Brown--Resnick construction from Gaussian fields with stationary
increments
\cite{BrownResnick1977,KabluchkoSchlatherdeHaan2009}.
\end{remark}
 
\begin{example}
Assume that \(q_a=1\) for every \(a\in\mathcal A\), and write
\(d_\T=d_q\) for the graph distance. For \(0<H\leq1/2\), let \(W^H\)
be the centered Gaussian field with \(W^H_\varnothing=0\) and
covariance
\[
  \E{W^H_vW^H_w}
  =
  \frac12
  \left\{
    |v|^{2H}
    +
    |w|^{2H}
    -
    d_\T(v,w)^{2H}
  \right\},
  \qquad v,w\in\T.
\]
It exists because \(t\mapsto t^{2H}\) is a Bernstein function. Its
restriction to every infinite ray is a discrete fractional Brownian
motion with Hurst parameter \(H\). Since its variogram
\(d_\T(v,w)^{2H}\) is invariant under common prefixes,
\[
  Z^H_v
  =
  \exp\left\{
    W^H_v-\frac12|v|^{2H}
  \right\}
\]
is branch-BR stationary. The case \(H=1/2\) is the Gaussian branching
random walk with unit edge variances.
\end{example}
 
Branch stationarity is not specific to lognormal spectral models. The
following recursion is the rooted-tree analogue of the classical
max-autoregressive construction of \cite{DavisResnick1989}.
 
\begin{example} 
\label{30}
Let \((\xi_v)_{v\in\T}\) be independent unit
\(\alpha\)-Fr\'echet random variables. For \(a\in\mathcal A\), fix
\(c_a\in[0,1]\), set
\[
  d_a=(1-c_a^\alpha)^{1/\alpha},
\]
and define
\begin{equation}
  \eta_\varnothing=\xi_\varnothing,
  \qquad
  \eta_{ua}
  =
  \max\{c_a\eta_u,d_a\xi_{ua}\},
  \qquad
  u\in\T,\quad a\in\mathcal A.
  \label{31}
\end{equation}
Then \(\eta\) is a branch-stationary max-stable field with unit
\(\alpha\)-Fr\'echet margins.
 
Indeed, \(\eta_u\) is independent of \(\xi_{ua}\), and
\(c_a^\alpha+d_a^\alpha=1\). Induction over the generations therefore
gives unit \(\alpha\)-Fr\'echet margins. Moreover, every
finite-dimensional vector of \(\eta\) is a finite max-linear
transform of independent \(\alpha\)-Fr\'echet random variables and is
hence max-stable.
 
To prove branch stationarity, fix \(u\in\T\) and define
\[
  \widetilde\xi_\varnothing^{(u)}=\eta_u,
  \qquad
  \widetilde\xi_v^{(u)}=\xi_{uv},
  \quad v\neq\varnothing.
\]
Since \(\eta_u\) depends only on innovations along its ancestral path,
it is independent of all strict-descendant innovations. Consequently,
\[
  (\widetilde\xi_v^{(u)})_{v\in\T}
  \eqd
  (\xi_v)_{v\in\T}.
\]
The field \(B^u\eta\) is generated from
\((\widetilde\xi_v^{(u)})_{v\in\T}\) by the same recursion
\eqref{31}, hence
\[
  B^u\eta\eqd\eta,
  \qquad u\in\T.
\]
 
For later use, if \(p=a_1\cdots a_m\), set
\[
  c(p)=\prod_{j=1}^m c_{a_j},
  \qquad
  c(\varnothing)=1.
\]
Iterating \eqref{31} gives, for every \(r\in\T\),
\begin{equation}
  \eta_{rp}
  =
  \max\left\{
    c(p)\eta_r,\,
    \bigvee_{j=1}^m
    c(a_{j+1}\cdots a_m)d_{a_j}
    \xi_{ra_1\cdots a_j}
  \right\},
  \label{32}
\end{equation}
where the maximum over the empty set is zero.
\end{example}
 
\begin{remark} 
\label{33}
Let \(v,w\in\T\) be distinct, let \(r=v\wedge w\), and write
\[
  v=rp,
  \qquad
  w=rq.
\]
Set
\[
  A=c(p)^\alpha,
  \qquad
  B=c(q)^\alpha.
\]
By \eqref{32}
\[
  \eta_v=\max\{c(p)\eta_r,R_{r,p}\},
  \qquad
  \eta_w=\max\{c(q)\eta_r,R_{r,q}\},
\]
where \(\eta_r,R_{r,p},R_{r,q}\) are independent and
\[
  \Pp\{R_{r,p}\leq x\}
  =
  \exp\{-(1-A)x^{-\alpha}\},
  \qquad
  \Pp\{R_{r,q}\leq x\}
  =
  \exp\{-(1-B)x^{-\alpha}\}.
\]
Here the exponent coefficients follow by telescoping in
\eqref{32}; if \(p=\varnothing\) or
\(q=\varnothing\), the corresponding residual is zero.
 
Consequently, the stable tail dependence function of
\((\eta_v,\eta_w)\) is
\begin{equation}
  \ell_{\{v,w\}}(z_1,z_2)
  =
  (1-A)z_1
  +
  (1-B)z_2
  +
  \max\{Az_1,Bz_2\}.
  \label{34}
\end{equation}
Hence its extreme-value copula is the Marshall--Olkin copula
\begin{equation}
  C_{\{v,w\}}(s,t)
  =
  s^{1-A}t^{1-B}\min\{s^A,t^B\},
  \qquad s,t\in(0,1],
  \label{35}
\end{equation}
and
\[
  \lambda_U(v,w)
  =
  2-\ell_{\{v,w\}}(1,1)
  =
  \min\{A,B\}.
\]
\end{remark}
 
\subsection{Branch-regenerative cascades}
\label{36}
 
The calculation in the introductory example rests on the following
regenerative factorisation. Suppose that \(Z\) is a \(1\)-spectral
process with \(Z_\varnothing=1\) and, for every \(u\in\T\),
\begin{equation}
  B^uZ\eqd Z_uZ^{(u)},
  \label{37}
\end{equation}
where \(Z^{(u)}\) has the same law as \(Z\) and is independent of
\(Z_u\).
 
For every \(F\in\Hh_1\), homogeneity and \(F(\zero)=0\) imply
\[
  F(Z_uZ^{(u)})
  =
  \begin{cases}
    Z_uF(Z^{(u)}), & Z_u>0,\\
    0, & Z_u=0.
  \end{cases}
\]
Consequently, independence and Tonelli's theorem yield
\[
\begin{aligned}
  \E{F(B^uZ)}
  &=
  \E{F(Z_uZ^{(u)})} =
  \E*{
    \one_{\{Z_u>0\}}
    Z_uF(Z^{(u)})
  } =
  \E{Z_u}\,\E{F(Z)}
  =
  \E{F(Z)}.
\end{aligned}
\]
Hence \(Z\) is branch-BR stationary.
 
Multiplicative cascades form a broad class satisfying
\eqref{37}. Let
$
  (M_{u,a})_{a\in\mathcal A},
   u\in\T$ 
be identically distributed non-negative offspring vectors that are
independent over \(u\); the coordinates within each offspring vector
may be dependent. Assume that
\[
  \E{M_{u,a}}=1,
  \qquad a\in\mathcal A.
\]
Starting with \(Z_\varnothing=1\), define recursively
\[
  Z_{ua}=Z_uM_{u,a},
  \qquad
  u\in\T,\quad a\in\mathcal A.
\]
Along every ancestral path, the multipliers are drawn from distinct
offspring vectors and are therefore independent. Hence
\(\E{Z_v}=1\) for every \(v\in\T\), so \(Z\) is a
\(1\)-spectral process. Moreover, the offspring vectors indexed by
\(u\) and its descendants are independent of \(Z_u\) and form an
independent copy of the original cascade. Therefore
\eqref{37} holds. Zeros are
hereditary in this construction, while \eqref{17} is automatic.
 
The logarithmic formulation includes the usual branching random walks
and cascades with infinitely divisible log-increments. Suppose that
the offspring log-increment vectors
\[
  (X_{u,a})_{a\in\mathcal A},
  \qquad u\in\T
\]
are identically distributed and independent over \(u\), and that
\[
  \kappa_a
  =
  \log\E{e^{X_{u,a}}}
  <
  \infty,
  \qquad a\in\mathcal A.
\]
Then
\[
  M_{u,a}
  =
  \exp\{X_{u,a}-\kappa_a\}
\]
defines a branch-BR cascade. Gaussian log-increments give a lognormal
cascade, while infinitely divisible offspring vectors give the
corresponding L\'evy-type constructions.

\section{Cluster constructions}
\label{38}
 
For \(f\in\F\), define its forward \(\alpha\)-mass and exceedance count by
\[
  S^+(f)=\sum_{v\in\T}f_v^\alpha,
  \qquad
  N^+(f)=\sum_{v\in\T}\one_{\{f_v>1\}}.
\]
On an extension carrying \(\Theta\), let \(R_{\mathrm P}\) be independent of
\(\Theta\) with the standard \(\alpha\)-Pareto law
\[
  \Pp\{R_{\mathrm P}>r\}=r^{-\alpha},
  \qquad r\geq1.
\]
 
\begin{lemma}
\label{39}
Let \(Z\) be an \(\alpha\)-spectral process satisfying
\eqref{2}, let
\(\Theta=\Theta^{[\varnothing]}\), and put
\(Y=R_{\mathrm P}\Theta\).
If \(Z\) satisfies \eqref{17}, the following statements are
equivalent:
\begin{enumerate}[(i)]
\item \(\Pp\{S^+(Z)<\infty\}=1\);
\item \(\Pp\{S^+(\Theta)<\infty\}=1\);
\item \(\Pp\{S^+(Y)<\infty\}=1\);
\item \(\Pp\{\E{N^+(Y)\mid\Theta}<\infty\}=1\).
\end{enumerate}
The same equivalences hold with \(<\infty\) replaced throughout by
\(=\infty\).
\end{lemma}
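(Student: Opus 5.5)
The plan is to reduce every statement to a $0$-homogeneous event about $Z$, or about $\Theta$, and then use that, under \eqref{17}, the root tilt defining $\Theta$ is equivalent to the law of the normalised field $Z/Z_\varnothing$.

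First, (i)$\Leftrightarrow$(ii). Let $E=\{f\in\F:S^+(f)<\infty\}$. This set is measurable, since $S^+$ is a countable sum of measurable functions. It is also a cone: $S^+(cf)=c^\alpha S^+(f)$, so $\one_E(cf)=\one_E(f)$ for every $c>0$.

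By \eqref{17} and the remark following it, $\Pp\{Z_\varnothing>0\}=1$. Applying definition \eqref{11} with $u=\varnothing$ and $H=\one_{E^c}$, and using $0$-homogeneity, gives
\[
  \Pp\{\Theta\notin E\}
  =
  \E*{Z_\varnothing^\alpha\one_{\{Z\notin E\}}}.
\]
Since $Z_\varnothing^\alpha>0$ almost surely, the right-hand side vanishes if and only if $\Pp\{Z\notin E\}=0$. Hence (i)$\Leftrightarrow$(ii). The same computation with $\one_E$ in place of $\one_{E^c}$ gives the $=\infty$ version: $\Pp\{\Theta\in E\}=0$ if and only if $\Pp\{Z\in E\}=0$.

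Second, (ii)$\Leftrightarrow$(iii). We have $S^+(Y)=R_{\mathrm P}^\alpha S^+(\Theta)$, and $R_{\mathrm P}\in[1,\infty)$ almost surely. So the events $\{S^+(Y)<\infty\}$ and $\{S^+(\Theta)<\infty\}$ coincide almost surely, and likewise with $=\infty$.

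Third, (ii)$\Leftrightarrow$(iv). By independence of $R_{\mathrm P}$ and $\Theta$, and Tonelli's theorem,
\[
  \E{N^+(Y)\mid\Theta}
  =
  \sum_{v\in\T}\Pp\{R_{\mathrm P}>1/\Theta_v\mid\Theta\}
  =
  \sum_{v\in\T}\min\{1,\Theta_v^\alpha\},
\]
with the convention that the term is $0$ when $\Theta_v=0$. Here each $\Theta_v$ is finite almost surely, because the tilt involves $Z_v/Z_\varnothing$ with $Z_\varnothing>0$.

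It remains to check, deterministically, that for a non-negative finite sequence $(x_v)$ we have $\sum_v\min\{1,x_v\}<\infty$ if and only if $\sum_v x_v<\infty$. The direction "$\Leftarrow$" holds because $\min\{1,x_v\}\le x_v$. For "$\Rightarrow$", finiteness of the left sum forces $x_v\ge1$ for only finitely many $v$, since each such $v$ contributes $1$. Those finitely many terms are finite, and the remaining terms satisfy $\min\{1,x_v\}=x_v$, so $\sum_v x_v<\infty$. Applying this with $x_v=\Theta_v^\alpha$ shows that the two events agree pointwise, which gives (ii)$\Leftrightarrow$(iv) in both the $<\infty$ and $=\infty$ forms.

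The only delicate step is the first one: the transfer from $\Theta$ back to $Z$ needs $Z_\varnothing>0$ almost surely, and this is exactly where \eqref{17} enters. Without it, the tilt by $Z_\varnothing^\alpha$ only sees the event $\{Z_\varnothing>0\}$, so (ii) could not detect failures of summability of $Z$ on $\{Z_\varnothing=0\}$.
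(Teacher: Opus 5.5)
Your proposal is correct and follows essentially the same route as the paper. For (i)$\Leftrightarrow$(ii) you use \eqref{17} to make the root tilt $Z_\varnothing^\alpha\,\dd\Pp$ have the same null sets as $\Pp$, together with the scale invariance of $\{S^+<\infty\}$. For (ii)$\Leftrightarrow$(iii) you use $R_{\mathrm P}\in[1,\infty)$, and for (ii)$\Leftrightarrow$(iv) the Pareto computation $\E{N^+(Y)\mid\Theta}=\sum_v\min\{1,\Theta_v^\alpha\}$ plus the elementary series comparison; your explicit cone $E$ and the choice $H=\one_{E^c}$ simply spell out the paper's null-set argument in more detail.
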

 
The root-tilt equivalence does not impose a zero--one law when the tree is
countably branching.
 
\begin{lemma}
\label{40}
Assume that \(\alpha=1\). For every \(p\in(0,1)\), there is a
lognormal branch-BR representer \(Z\) on a countably branching tree such
that
\[
  \Pp\{S^+(Z)<\infty\}=p.
\]
\end{lemma}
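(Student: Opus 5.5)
The plan is to get a non-trivial probability from a \emph{single} common Gaussian factor in a prefix-additive potential. A branching random walk with independent edges will not work: its root subtrees are independent copies, and the recursion \(S^+(Z)=1+\sum_a Z_aS^+(Z^{(a)})\) forces a \(0\)--\(1\) law. Concretely, take a countable alphabet \(\mathcal A\), weights \(s:\mathcal A\to(0,\infty)\), and extend \(s\) additively to words, \(s(a_1\cdots a_k)=\sum_j s(a_j)\), with \(s(\varnothing)=0\). For a standard normal \(N\), put
\[
  W_v=N s(v),\qquad Z_v=\exp\{Ns(v)-s(v)^2/2\}.
\]
Then \(\Gamma(v,w)=(s(v)-s(w))^2\). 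Additivity gives \(s(uv)-s(uw)=s(v)-s(w)\), so \eqref{26} holds and \(Z\) is branch-BR stationary by \Cref{25}. Moreover \(\E{Z_v}=1\) and \(Z>0\), so \eqref{2} holds.

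Everything then reduces to a deterministic series in the scalar \(N\). Let \(c_m=\#\{a\in\mathcal A:s(a)=m\}\) and \(w_m=\#\{v\in\T:s(v)=m\}\). If all letter weights lie in a lattice, say in \(L\mathbb N\), then
\[
  S^+(Z)=\sum_m w_m\exp\{Nm-m^2/2\}.
\]
I will choose, for a parameter \(\beta\in\mathbb R\) and a large integer \(L\),
\[
  c_m=\bigl\lceil e^{m^2/2+\beta m}\bigr\rceil \text{ for } m\in L\mathbb N,\qquad c_m=0 \text{ otherwise}.
\]
Each \(c_m\) is finite and the alphabet is countable.

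The main step is the two-sided estimate \(w_m=e^{m^2/2+\beta m+o(m)}\) as \(m\to\infty\) along \(L\mathbb N\). The lower bound \(w_m\geq c_m\) is immediate from one-letter words. For the upper bound, write \(w_m=\sum\prod_i c_{m_i}\) over compositions \(m=m_1+\cdots+m_k\) with all \(m_i\in L\mathbb N\). A composition with \(k\geq2\) parts has \(\sum m_i^2\leq m^2-2L(m-L)\), so its contribution is at most \(2^k e^{m^2/2+\beta m-L(m-L)}\). There are at most \(2^{m/L}\) compositions, so for \(L\) large (depending on \(\beta\)) the multi-letter part is \(O(c_m e^{-\delta m})\) for some \(\delta>0\). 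Hence \(w_m=c_m(1+o(1))\).

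Given this estimate, the general term of the series is \(\exp\{(N+\beta)m+o(m)\}\). The series therefore converges when \(N<-\beta\) and diverges when \(N>-\beta\). The boundary \(\{N=-\beta\}\) is null, so
\[
  \Pp\{S^+(Z)<\infty\}=\Pp\{N<-\beta\}=\Phi(-\beta).
\]
Choosing \(\beta=-\Phi^{-1}(p)\) gives the value \(p\). I expect the main obstacle to be the composition count, specifically keeping the subexponential error uniform. If the \(2^{m/L}\) bound turns out too crude, I would fall back on the generating function \(\sum_m w_m x^m=(1-\sum_m c_m x^m)^{-1}\), restricted to the lattice, and compare it termwise with \(\sum_m c_m x^m\).
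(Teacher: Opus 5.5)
Your proof is correct. Its construction is the paper's: a rank-one field \(W_v=XA_v\) with additive letter weights, branch-BR stationarity from the common-prefix variogram criterion of \Cref{25}, and \(\{S^+(Z)<\infty\}\) equal to a half-line event for the single Gaussian factor, tuned to have probability \(p\).

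Where you differ is the alphabet and how you control the whole-tree mass. The paper uses one letter per weight, \(a_k=c+\sqrt{c^2+2\log k}\), so that \(ca_k-a_k^2/2=-\log k\). The first-generation mass \(F(x)=\sum_k k^{-1}e^{(x-c)a_k}\) is then finite iff \(x<c\), by an integral test. All deeper generations are handled at once by the cross-term bound \(A_v^2/2\ge\frac12\sum_j a_{k_j}^2+\binom n2 a_*^2\), which gives \(\sum_{v\in\mathcal A^n}Z_v\le e^{-\binom n2 a_*^2}F(x)^n\).

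You instead put weights on the lattice \(L\mathbb N\) with multiplicities \(c_m\approx e^{m^2/2+\beta m}\), and you group words by total weight rather than by generation. The composition count gives \(w_m\asymp c_m\), so \(S^+(Z)\) is comparable to a geometric series in \(e^{N+\beta}\) and no integral test is needed. The price is taking \(L\) large. You need \(L\ge-2\beta\) so that \(c_{m_i}\le 2e^{m_i^2/2+\beta m_i}\). The combined factor \(4^{m/L}\), from \(2^k\) times the number of compositions, is absorbed by \(e^{-L(m-L)}\le e^{-Lm/2}\) for \(m\ge 2L\).

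The paper's generation-wise bound also applies verbatim to your alphabet with \(a_*=L\). Your first-generation mass is at most \(\sum_m\bigl(e^{(N+\beta)m}+e^{Nm-m^2/2}\bigr)\). So that bound gives convergence for \(N<-\beta\) for any \(L>0\), making the composition estimate unnecessary. Divergence for \(N\ge-\beta\) already follows from the one-letter words.
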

 
\begin{remark}
\label{41}
The preceding construction does not extend directly to a stationary
Brown--Resnick process indexed by \(\mathbb Z\); see also
\cite[Question~6.2]{WangStoev2010}.
\end{remark}
 
We now construct branch-invariant tail measures supported on forward
\(\alpha\)-summable fields.
 
Let
\[
  \G=\mathbb F_{\mathcal A}
\]
be the free group generated by \(\mathcal A\). For general
\(\mathcal A\), the free monoid
\[
  \T=\mathcal A^*
\]
is naturally embedded in \(\G=\mathbb F_{\mathcal A}\) as the positive
submonoid consisting of all reduced words containing no inverse letters.
For every \(u\in\T\), right multiplication
\[
  R_u:\G\to\G,
  \qquad
  R_u(g)=gu
\]
is a bijection, with inverse
\[
  R_u^{-1}(h)=hu^{-1}.
\]
By contrast, the corresponding map
\[
  \T\to\T,
  \qquad
  v\mapsto vu
\]
is injective but, unless \(u=\varnothing\), not surjective. Thus
\(\G\) consists of reduced finite words in the letters
\(a\in\mathcal A\) and their formal inverses.
 
If \(\mathcal A=\{L,R\}\), then
\[
  \mathcal A^{-1}=\{L^{-1},R^{-1}\},
\]
and
\[
  \G=\mathbb F_{\{L,R\}}
  =
  \{\varnothing\}
  \cup
  \left\{
    x_1\cdots x_n:
    n\geq1,\quad
    x_i\in\{L,R,L^{-1},R^{-1}\},\quad
    x_{i+1}\neq x_i^{-1}
    \text{ for }1\leq i<n
  \right\}.
\]
Words such as \(RLLLL\) and \(RL^{-1}RLL\) are reduced and hence belong
to \(\G\). By contrast, \(RL^{-1}LR\) is not reduced, since the adjacent
letters \(L^{-1}L\) cancel; it represents the same group element as
\(RR\). Multiplication in \(\G\) is concatenation followed by the
cancellation of adjacent inverse pairs
\[
  LL^{-1}=L^{-1}L=RR^{-1}=R^{-1}R=\varnothing.
\]
The rooted binary tree
\[
  \T=\{L,R\}^*
\]
is naturally embedded in \(\G\) as the positive submonoid consisting
of words containing only \(L\) and \(R\).
 
Let \(Q=(Q_g)_{g\in\G}\) be a non-negative random field satisfying
\begin{equation}
  0<c_Q
  :=
  \E*{\sum_{g\in\G}Q_g^\alpha}
  <\infty.
  \label{42}
\end{equation}
We refer to \(Q\) as an integrable cluster field. Condition
\eqref{42} also implies
\[
  \sum_{g\in\G}Q_g^\alpha<\infty
  \qquad\text{almost surely}.
\]
 
For \(g\in\G\), let
\[
  Q^{[g]}=(Q_{gv})_{v\in\T}
\]
denote the positive-tree restriction of the cluster viewed from the anchor
\(g\). Then, for every \(u\in\T\),
\[
  B^uQ^{[g]}
  =
  \bigl(Q_{guv}\bigr)_{v\in\T}
  =
  Q^{[gu]}.
\]
Consequently, shifting to the descendant subtree below \(u\)
corresponds to replacing the cluster anchor \(g\) by \(gu\). Since
\(g\mapsto gu\) is a bijection of \(\G\), sums over all cluster
anchors are unchanged under descendant shifts. This is the mechanism
that yields branch invariance without a boundary term.
 
Fix a strictly positive probability mass function
\(\pi=(\pi_v)_{v\in\T}\) and define
\[
  \rho_\pi:\F\to[0,\infty],
  \qquad
  \rho_\pi(f)
  =
  \left(
    \sum_{v\in\T}\pi_vf_v^\alpha
  \right)^{1/\alpha}.
\]
By \eqref{42},
\[
  \rho_\pi(Q^{[g]})^\alpha
  \leq
  \sum_{h\in\G}Q_h^\alpha
  <\infty
  \qquad\text{almost surely}
\]
for every \(g\in\G\). The next result gives the cluster construction.
 
\begin{proposition}
\label{43}
Assume \eqref{42} and define the law of a random field
\(Z^{Q,\pi}\) on \(\T\) by
\begin{equation}
  \Ehat{H(Z^{Q,\pi})}
  =
  \frac1{c_Q}
  \sum_{g\in\G}
  \E*{
    \rho_\pi(Q^{[g]})^\alpha
    H\left(
      \frac{Q^{[g]}}{\rho_\pi(Q^{[g]})}
    \right)
  }, \qquad H \in \mathfrak H.
  \label{44}
\end{equation}  On
\(\{\rho_\pi(Q^{[g]})=0\}\), the ratio in
\eqref{44} may be defined arbitrarily and the
corresponding summand is understood to be zero. If
\(\widehat{\Pp}_\pi\) stands for the resulting probability measure,
then:
\begin{enumerate}[(i)]
\item \eqref{44} is a probability law,
\[
  \rho_\pi(Z^{Q,\pi})=1
  \quad\text{almost surely},
  \qquad
  \Ehat{(Z_v^{Q,\pi})^\alpha}=1,
  \quad v\in\T;
\]
in particular, \(Z^{Q,\pi}\neq\zero\) almost surely;
\item \(Z^{Q,\pi}\) is almost surely \(\alpha\)-summable, i.e.,
\[
  \sum_{v\in\T}(Z_v^{Q,\pi})^\alpha<\infty;
\]
\item for every \(F\in\Hh_\alpha\),
\begin{equation}
  \Ehat{F(Z^{Q,\pi})}
  =
  \frac1{c_Q}
  \sum_{g\in\G}\E{F(Q^{[g]})}.
  \label{45}
\end{equation}
Consequently, \(Z^{Q,\pi}\) is branch-BR stationary, and its
homogeneous class does not depend on \(\pi\);
\item its tail measure is
\begin{equation}
  \nu_Q(A)
  =
  \frac1{c_Q}
  \sum_{g\in\G}
  \int_0^\infty
  \Pp\{rQ^{[g]}\in A,\ Q^{[g]}\neq\zero\}
  \alpha r^{-\alpha-1}\dd r,
  \qquad A\in\mathcal B(\F),
  \label{46}
\end{equation}
and satisfies
\begin{equation}
  \nu_Q\bigl((B^u)^{-1}A\bigr)
  =
  \nu_Q(A),
  \qquad
  A\in\mathcal B(\F^\circ),\quad u\in\T;
  \label{47}
\end{equation}
\item the root spectral tail field \(\Theta^Q\) is independent of
\(\pi\) and satisfies
\begin{equation}
  \E{H(\Theta^Q)}
  =
  \frac1{c_Q}
  \sum_{g\in\G}
  \E*{
    \one_{\{Q_g>0\}}
    Q_g^\alpha
    H\left(
      \left(\frac{Q_{gv}}{Q_g}\right)_{v\in\T}
    \right)
  }, \qquad F\in\Hh.
  \label{48}
\end{equation}
The summand is understood to be zero on \(\{Q_g=0\}\). In particular, $
  \Pp\{\sum_{v\in\T}(\Theta_v^Q)^\alpha<\infty\}=1$.
\end{enumerate}
\end{proposition}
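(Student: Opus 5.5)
The whole proof rests on one computation, and I would do it first. Fix \(g\in\G\) and put \(\rho_g=\rho_\pi(Q^{[g]})\). For \(F\in\Hh_\alpha\), homogeneity gives \(\rho_g^\alpha F(Q^{[g]}/\rho_g)=F(Q^{[g]})\) on \(\{\rho_g>0\}\). Because \(\pi\) is strictly positive, \(\rho_g=0\) exactly when \(Q^{[g]}=\zero\), and there \(F(Q^{[g]})=F(\zero)=0\). Summing over \(g\) in \eqref{44} then gives \eqref{45} directly, which is item (iii).

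For (i), take \(H\equiv1\) in \eqref{44}. The total mass is
\[
  \frac1{c_Q}\sum_{g}\E*{\rho_g^\alpha}
  =
  \frac1{c_Q}\sum_{v\in\T}\pi_v\,\E*{\sum_{g\in\G}Q_{gv}^\alpha}
  =
  \frac1{c_Q}\sum_v\pi_v c_Q
  =1 ,
\]
by Tonelli and the bijectivity of \(g\mapsto gv\) on \(\G\). The identity \(\rho_\pi(Z^{Q,\pi})=1\) holds because every normalised ratio has \(\rho_\pi=1\), and in particular \(Z^{Q,\pi}\neq\zero\). The unit moment follows by applying \eqref{45} to \(F(f)=f_v^\alpha\), which gives \(c_Q^{-1}\sum_g\E{Q_{gv}^\alpha}=1\) by the same bijection. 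For (ii), the summability property \(\sum_{v\in\T}f_v^\alpha<\infty\) is scale invariant. Each \(Q^{[g]}\) is a.s. summable, since \(\sum_v Q_{gv}^\alpha\le\sum_h Q_h^\alpha<\infty\). Taking \(H=\one\{\text{non-summable}\}\) in \eqref{44} therefore gives probability zero.

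Branch-BR stationarity is the consequence of (iii). Since \(B^uQ^{[g]}=Q^{[gu]}\) and \(F\circ B^u\) is again \(\alpha\)-homogeneous with \(F(B^u\zero)=0\), we get
\[
  \Ehat{F(B^uZ^{Q,\pi})}
  =c_Q^{-1}\sum_g\E{F(Q^{[gu]})}
  =\Ehat{F(Z^{Q,\pi})},
\]
again by bijectivity of \(g\mapsto gu\). This is condition (ii) of \Cref{13}. The right side of \eqref{45} does not involve \(\pi\), so the homogeneous class is independent of \(\pi\).

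For (iv), I would check \eqref{46} on sets \(A\subset\F^\circ\). The functional \(F_A(f)=\int_0^\infty\one_A(rf)\alpha r^{-\alpha-1}\dd r\) belongs to \(\Hh_\alpha\) by the substitution \(r\mapsto r/c\). Applying \eqref{45} to \(F_A\) identifies \(\nu_Z^\circ\) with the right side of \eqref{46}; the event \(\{Q^{[g]}\neq\zero\}\) removes the infinite atom, and \(\nu_Q\{\zero\}=0\). Invariance \eqref{47} is the same reindexing \(g\mapsto gu\), or equivalently follows from \Cref{13}(iii).

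For (v), I would use \eqref{44} with \(H'(f)=\one_{\{f_\varnothing>0\}}f_\varnothing^\alpha H(f/f_\varnothing)\), which is \(0\)-homogeneous in the ratio and \(\alpha\)-homogeneous overall. This gives
\[
  \E{H(\Theta^Q)}
  =\Ehat{H'(Z^{Q,\pi})}
  =c_Q^{-1}\sum_g\E{H'(Q^{[g]})},
\]
the last step because \(H'\in\Hh_\alpha\) (note \(H'(\zero)=0\)). Since \(Q^{[g]}_\varnothing=Q_g\), this is exactly \eqref{48}, and it is independent of \(\pi\). The final summability claim follows from the same argument as (ii), since every ratio field is a multiple of a summable one.

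The main obstacle is measurability and zero-handling. Specifically, one must verify that \(f\mapsto f/\rho_\pi(f)\) and the composite functionals are measurable on \(\F\) with its product \(\sigma\)-field, and that the \(0\cdot\infty\) conventions are consistent with \eqref{11} when \(\rho_\pi(f)=\infty\). That case does not arise here, because \(\rho_g<\infty\) a.s. by \eqref{42}.
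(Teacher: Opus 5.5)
Your proof is correct and follows essentially the same route as the paper: Tonelli plus bijectivity of $g\mapsto gv$ for the normalisation, homogeneity cancelling the gauge to give \eqref{45}, the reindexing $g\mapsto gu$ for branch invariance, and the root tilt for \eqref{48}. The only cosmetic difference is in how you obtain the tail measure and \eqref{48}: you apply \eqref{45} to the $\alpha$-homogeneous functionals $F_A$ and $H'$, whereas the paper performs the radial substitution $s=r/\rho_\pi(Q^{[g]})$ and the root tilt directly.
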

 
\begin{corollary}
\label{49}
For the representer \(Z^{Q,\pi}\),
\[
  Z_\varnothing^{Q,\pi}>0
  \qquad\text{almost surely}
\]
if and only if
\begin{equation}
  \Pp\{Q_g=0,\ Q_{gv}>0\}=0,
  \qquad g\in\G,\quad v\in\T.
  \label{50}
\end{equation}
\end{corollary}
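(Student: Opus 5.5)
We compute \(\widehat{\Pp}_\pi\{Z^{Q,\pi}_\varnothing=0\}\) directly from \eqref{44}. Take
\[
  H(f)=\one_{\{f_\varnothing=0\}}.
\]
The ratio \(Q^{[g]}/\rho_\pi(Q^{[g]})\) has zero root coordinate exactly when \(Q_g=0\), provided \(\rho_\pi(Q^{[g]})>0\). Hence
\[
  \widehat{\Pp}_\pi\{Z^{Q,\pi}_\varnothing=0\}
  =\frac1{c_Q}\sum_{g\in\G}
  \E*{\rho_\pi(Q^{[g]})^\alpha\one_{\{Q_g=0\}}}
  =\frac1{c_Q}\sum_{g\in\G}\sum_{v\in\T}\pi_v\,
  \E*{Q_{gv}^\alpha\one_{\{Q_g=0\}}}.
\]
The second equality expands \(\rho_\pi^\alpha\) and uses Tonelli. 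Positivity of the summand already forces \(\rho_\pi(Q^{[g]})>0\), so the convention on \(\{\rho_\pi=0\}\) plays no role.

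Thus \(Z^{Q,\pi}_\varnothing>0\) almost surely if and only if every term in this double sum vanishes. Since \(\pi_v>0\) for all \(v\), this holds if and only if
\[
  \E*{Q_{gv}^\alpha\one_{\{Q_g=0\}}}=0
  \qquad\text{for all } g\in\G,\ v\in\T.
\]
A non-negative random variable has zero expectation exactly when it vanishes almost surely. Here that means \(Q_{gv}^\alpha\one_{\{Q_g=0\}}=0\) a.s., i.e.\ \(\Pp\{Q_g=0,\ Q_{gv}>0\}=0\), which is \eqref{50}.

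All the quantities involved are non-negative, so none of these manipulations can fail. The only point needing care is the bookkeeping of the convention on \(\{\rho_\pi(Q^{[g]})=0\}\). On that event \(Q_{gv}=0\) for every \(v\in\T\), because \(\pi\) is strictly positive, so the summand is zero under either reading of the convention.

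An alternative route is via part (v) of \Cref{43}, which identifies the root spectral tail field \(\Theta^Q\). However, the root tilt only sees the event \(\{Z_\varnothing>0\}\), so the direct computation above is cleaner.
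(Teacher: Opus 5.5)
Your proof is correct and is essentially the paper's: both evaluate \(\widehat{\Pp}_\pi\{Z_\varnothing^{Q,\pi}=0\}\) from \eqref{44} as \(c_Q^{-1}\sum_{g\in\G}\E{\rho_\pi(Q^{[g]})^\alpha\one_{\{Q_g=0\}}}\) and conclude using strict positivity of \(\pi\) and countability of \(\G\) and \(\T\). The only cosmetic difference is that you expand \(\rho_\pi^\alpha\) by Tonelli into \(\sum_{v\in\T}\pi_v\E{Q_{gv}^\alpha\one_{\{Q_g=0\}}}\), whereas the paper argues directly that \(\rho_\pi(Q^{[g]})>0\) if and only if \(Q_{gv}>0\) for some \(v\in\T\).
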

 
\begin{remark}
\label{51}
Let
\[
  \sum_{i\geq1}\delta_{(R_i,G_i,Q^{(i)})}
\]
be a Poisson point process on
\((0,\infty)\times\G\times[0,\infty)^\G\) with intensity
\[
  c_Q^{-1}\alpha r^{-\alpha-1}\dd r
  \otimes\#(\dd g)\otimes\Pp_Q(\dd q),
\]
where \(\#\) is counting measure on \(\G\) and \(\Pp_Q\) is the law
of \(Q\). Then
\begin{equation}
  \eta_v^Q
  =
  \bigvee_{i\geq1}R_iQ^{(i)}_{G_iv},
  \qquad v\in\T
  \label{52}
\end{equation}
is a branch-stationary max-stable field with unit
\(\alpha\)-Fr\'echet margins and exponent measure \(\nu_Q\).
Atoms for which \(Q^{(i),[G_i]}=\zero\) may be discarded, since they
do not contribute to the maximum; the resulting exponent measure on
\(\F^\circ\) is \(\nu_Q\).
\end{remark}
 
\begin{example}
\label{53}
Let \(\mathcal A=\{L,R\}\) and
\[
  \G=\mathbb F_{\{L,R\}}.
\]
Write
\[
  d(c,c')=|c^{-1}c'|,
  \qquad c,c'\in\G,
\]
for its word metric. The positive words form the observed binary tree
\(\T=\{L,R\}^*\). For \(0<\rho<1/3\), let
\[
  \varphi_\rho(n)=\rho^{n/\alpha},
  \qquad n\in\mathbb N_0.
\]
Since the sphere of radius \(n\geq1\) in \(\G\) has
\(4\cdot3^{n-1}\) elements,
\begin{equation}
  c_\rho
  :=
  \sum_{c\in\G}
  \varphi_\rho(d(c,\varnothing))^\alpha
  =
  1+4\sum_{n\geq1}3^{n-1}\rho^n
  =
  \frac{1+\rho}{1-3\rho}.
  \label{54}
\end{equation}
Take the deterministic cluster
\[
  Q_g=\varphi_\rho(|g|).
\]
After the reindexing \(g=c^{-1}\),
\Cref{51} gives a max-stable field \(\zeta\)
with exponent functions
\begin{equation}
\begin{split}
  -\log\Pp\{\zeta_v\leq x_v,\ v\in K\}
  &=
  V_K^\rho(x) \\
  &=
  \frac1{c_\rho}
  \sum_{c\in\G}
  \max_{v\in K}
  \left(
    \frac{\varphi_\rho(d(c,v))}{x_v}
  \right)^\alpha.
\end{split}
  \label{55}
\end{equation}
The normalization in \eqref{54} gives unit
\(\alpha\)-Fr\'echet margins. For \(u\in\T\), left translation
\(c\mapsto u^{-1}c\) is an isometric bijection and
\[
  d(c,uv)=d(u^{-1}c,v).
\]
Reindexing the sum in \eqref{55} yields
\[
  -\log\Pp\{\zeta_{uv}\leq x_v,\ v\in K\}
  =
  V_K^\rho(x)
\]
and therefore proves branch stationarity.
 
The same construction applies to every non-zero radial profile
\(\varphi:\mathbb N_0\to[0,\infty)\) satisfying
\[
  \sum_{c\in\G}\varphi(|c|)^\alpha<\infty.
\]
Two useful finite-range choices are
\[
  \varphi_R(n)=\one_{\{n\leq R\}},
  \qquad
  \varphi_{\beta,R}(n)
  =
  \exp\left\{-\frac{\beta n^2}{2\alpha}\right\}
  \one_{\{n\leq R\}}.
\]
The case \(R=0\) gives independent Fr\'echet coordinates, while finite
\(R\) gives finite-range dependence. Because these profiles have
zeros, the corresponding gauge-normalized representers need not
satisfy \eqref{17}, although branch stationarity is unaffected.
For a regular \(b\)-ary tree, the same construction uses sphere sizes
\[
  2b(2b-1)^{n-1},
  \qquad n\geq1.
\]
Thus the geometric profile requires
\[
  0<\rho<\frac1{2b-1}.
\]
\end{example}

\section{Branch-invariant extreme-value and Archimax copulas}
\label{56}
 
Throughout this section, \(K\) denotes a non-empty finite subset of
\(\T\). For \(x=(x_v)_{v\in K}\in[0,\infty)^K\), define the stable
tail dependence function
\begin{equation}
  \ell_K(x)
  =
  \E*{
    \max_{v\in K}x_vZ_v^\alpha
  }.
  \label{57}
\end{equation}
It is finite, continuous, \(1\)-homogeneous, and
\begin{equation}
  \Pp\{\eta_v^{-\alpha}\geq y_v,\ v\in K\}
  =
  \exp\{-\ell_K(y)\},
  \qquad y\in[0,\infty)^K.
  \label{58}
\end{equation}
Consequently, the field
\[
  U_v=\exp\{-\eta_v^{-\alpha}\},
  \qquad v\in\T
\]
has standard uniform margins, and the copula of
\((U_v)_{v\in K}\) is
\begin{equation}
  C_K(u)
  =
  \exp\{-\ell_K(-\log u)\},
  \qquad u\in(0,1]^K.
  \label{59}
\end{equation}
The expression extends continuously to \([0,1]^K\).
 
The families \((\ell_K)\) and \((C_K)\) are projectively consistent:
if \(K\subset L\), then
\[
  \ell_L(x,0_{L\setminus K})=\ell_K(x),
  \qquad
  C_L(u,1_{L\setminus K})=C_K(u).
\]
 
For \(a\in\T\), write
\[
  aK=\{av:v\in K\},
  \qquad
  x^a_{av}=x_v,
  \qquad
  u^a_{av}=u_v
\]
and let  
\[
  \psi:[0,\infty)\to(0,1]
\]
be a completely monotone Archimedean generator satisfying
\[
  \psi(0)=1,
  \qquad
  \lim_{t\to\infty}\psi(t)=0.
\]
Then \(\psi\) is strictly decreasing and has an inverse
\(\psi^{-1}:(0,1]\to[0,\infty)\); see
\cite{charpentier2014multivariate,genest2024copula}. By Bernstein's
theorem, there exists a non-negative random variable \(S_\psi\) such
that
\[
  \E{e^{-tS_\psi}}=\psi(t),
  \qquad t\geq0.
\]
Moreover, we have 
\[
  \Pp\{S_\psi=0\}
  =
  \lim_{t\to\infty}\psi(t)
  =
  0,
\]
and hence \(S_\psi>0\) almost surely. We take \(S_\psi\) independent
of \(\eta\) and set
\begin{equation}
  C_K^\psi(u)
  =
  \psi\left(
    \ell_K\bigl(\psi^{-1}(u)\bigr)
  \right),
  \qquad u\in(0,1]^K,
  \label{60}
\end{equation}
where vector operations are understood componentwise.
 
\begin{proposition}
\label{61}
The field
\begin{equation}
  U_v^\psi
  =
  \psi\left(
    \frac{\eta_v^{-\alpha}}{S_\psi}
  \right),
  \qquad v\in\T,
  \label{62}
\end{equation}
has standard uniform margins and finite-dimensional copulas
\((C_K^\psi)\). In particular, these copulas extend continuously to
\([0,1]^K\) and form a projectively consistent family of Archimax
copulas.
 
The following statements are equivalent:
\begin{enumerate}[(i)]
\item \(\eta\) is branch-stationary;
\item for every non-empty finite \(K\subset\T\),
\begin{equation}
  \ell_{aK}(x^a)=\ell_K(x),
  \qquad
  a\in\T,\quad x\in[0,\infty)^K;
  \label{63}
\end{equation}
\item for every non-empty finite \(K\subset\T\),
\[
  C_{aK}(u^a)=C_K(u),
  \qquad
  a\in\T,\quad u\in(0,1]^K;
\]
\item for every non-empty finite \(K\subset\T\),
\[
  C_{aK}^\psi(u^a)=C_K^\psi(u),
  \qquad
  a\in\T,\quad u\in(0,1]^K;
\]
\item \(U^\psi\) is branch-stationary.
\end{enumerate}
 
If \(Z\) satisfies \eqref{17} and
\(\Theta=\Theta^{[\varnothing]}\), then
\begin{equation}
  \ell_K(x)
  =
  \E*{
    \max_{v\in K}x_v\Theta_v^\alpha
  },
  \qquad x\in[0,\infty)^K.
  \label{64}
\end{equation}
\end{proposition}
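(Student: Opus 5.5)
The argument has three parts: the law of \(U^\psi\), the five-way equivalence, and the root formula \eqref{64}.

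\emph{Law of \(U^\psi\).} Condition on \(S_\psi\). For \(u\in(0,1]^K\), since \(\psi\) is strictly decreasing,
\[
\{U_v^\psi\le u_v\}=\{\eta_v^{-\alpha}\ge S_\psi\psi^{-1}(u_v)\}.
\]
By \eqref{58} and \(1\)-homogeneity of \(\ell_K\),
\[
\Pp\{U_v^\psi\le u_v,\ v\in K\mid S_\psi\}=\exp\{-S_\psi\,\ell_K(\psi^{-1}(u))\}.
\]
Taking expectations and using the Laplace transform of \(S_\psi\) gives \(C_K^\psi(u)\).

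\emph{Margins and consistency.} The margins follow from \(\ell_{\{v\}}(x)=x\), which gives \(\psi(\psi^{-1}(u))=u\). Projective consistency is inherited from \(\ell_L(x,0)=\ell_K(x)\), because \(\psi^{-1}(1)=0\). Continuity on \([0,1]^K\) follows from continuity of \(\ell_K\) and the convention \(\psi^{-1}(0)=\infty\) with \(\psi(\infty)=0\). That these are Archimax copulas is then immediate from \eqref{60}.

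\emph{Equivalences.} Finite-dimensional distributions determine the law on the countable product space, so branch stationarity of \(\eta\) is equivalent to equality of all fidis of \(B^a\eta\) and \(\eta\). By \eqref{3}, the law of \((\eta_{av})_{v\in K}\) is \(\exp\{-\ell_{aK}(x^{a,-\alpha})\}\). Comparing this with the law of \((\eta_v)_{v\in K}\) on \((0,\infty)^K\), and extending to \([0,\infty)^K\) by continuity, gives (i)\(\Leftrightarrow\)(ii). The step (ii)\(\Leftrightarrow\)(iii) uses the bijection \(u\leftrightarrow-\log u\) in \eqref{59}. For (ii)\(\Leftrightarrow\)(iv), note that \(\psi\) is injective and \(\psi^{-1}\) maps \((0,1]\) onto \([0,\infty)\), so \(C^\psi_{aK}(u^a)=C^\psi_K(u)\) for all \(u\) is exactly \eqref{63}. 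Finally, (iv)\(\Leftrightarrow\)(v) holds because \(U^\psi\) has uniform margins and copulas \(C^\psi_K\), so its fidis are these copulas. Hence stationarity of \(U^\psi\) under \(B^a\) means \(C^\psi_{aK}(u^a)=C^\psi_K(u)\).

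\emph{Root formula \eqref{64}.} Under \eqref{17}, \(F(f)=\max_{v\in K}x_vf_v^\alpha\) lies in \(\Hh_\alpha\). Then \eqref{18}, namely \(\Theta\sim_\alpha Z\), gives \(\ell_K(x)=\E{F(Z)}=\E{F(\Theta)}\), which is \eqref{64}.

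The main obstacle is the uniqueness argument in (i)\(\Leftrightarrow\)(ii). The distribution function determines \(\ell_K\) only at points with positive coordinates, and zero coordinates require passing to the limit. I would handle this with monotone convergence in \eqref{57}, or simply by projective consistency, which reduces a point with zero coordinates to a smaller index set \(K\). Everything else is bookkeeping.
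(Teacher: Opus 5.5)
Your proof is correct and follows the paper's argument essentially step by step. You condition on $S_\psi$ and use \eqref{58} with homogeneity for the Archimax law, compare fidis via \eqref{3} for (i)$\Leftrightarrow$(ii), substitute $u=\psi(x)$ and use injectivity of $\psi$ for (iv)$\Rightarrow$(ii), and apply the root tilt (cited as \eqref{18}) for \eqref{64}.

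Your treatment of zero coordinates via projective consistency is slightly more careful than the paper's. For the boundary continuity, however, continuity of $\ell_K$ alone is not enough. You should add that $\ell_K(x)\geq\max_{v\in K}x_v$ (because $\E{Z_v^\alpha}=1$), since this is what forces $\ell_K(\psi^{-1}(u))\to\infty$ as some $u_v\downarrow0$.
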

 
For \(x\in(0,\infty)^K\), \(f\in\F\), and \(v\in K\), let
$   
  J_K(x,f)
  =
  \operatorname*{arg\,max}_{w\in K}
  x_wf_w^\alpha
$ 
and define
\begin{equation}
  \omega_{K,v}(x,f)
  =
  \frac{\one_{\{v\in J_K(x,f)\}}}{|J_K(x,f)|}.
  \label{65}
\end{equation}
The maps \(\omega_{K,v}\) are Borel measurable and satisfy
\[
  \sum_{v\in K}\omega_{K,v}(x,f)=1,
\]
\[
  \omega_{K,v}(cx,df)=\omega_{K,v}(x,f),
  \qquad c,d>0,
\]
and
\[
  \omega_{aK,av}(x^a,f)
  =
  \omega_{K,v}(x,B^af),
  \qquad a\in\T.
\]
 
\begin{corollary}
\label{66}
For \(v\in K\), define
\begin{equation}
  \Psi_{K,v}(x)
  =
  \E*{
    Z_v^\alpha\omega_{K,v}(x,Z)
  },
  \qquad x\in(0,\infty)^K.
  \label{67}
\end{equation}
Then
\[
  0
  \leq
  \Psi_{K,v}(x)
  =
  \E*{
    \omega_{K,v}(x,\Theta^{[v]})
  }
  \leq1,
  \qquad
  \Psi_{K,v}(cx)=\Psi_{K,v}(x),
  \quad c>0,
\]
and
\begin{equation}
  \ell_K(x)
  =
  \sum_{v\in K}x_v\Psi_{K,v}(x).
  \label{68}
\end{equation}
The field \(\eta\) is branch-stationary if and only if
\begin{equation}
  \Psi_{aK,av}(x^a)
  =
  \Psi_{K,v}(x),
  \qquad
  a\in\T,\quad
  v\in K,\quad
  x\in(0,\infty)^K,
  \label{69}
\end{equation}
for every non-empty finite \(K\subset\T\). If \(Z\) satisfies \eqref{17} and
\(\Theta=\Theta^{[\varnothing]}\), then
\begin{equation}
  \Psi_{K,v}(x)
  =
  \E*{
    \Theta_v^\alpha
    \omega_{K,v}(x,\Theta)
  }.
  \label{70}
\end{equation}
\end{corollary}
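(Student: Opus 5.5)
The plan is to derive each assertion of \Cref{66} directly from the definition \eqref{67} and the listed properties of the winner weights \(\omega_{K,v}\).

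\emph{Step 1: the decomposition \eqref{68}.} Fix \(x\in(0,\infty)^K\) and \(f\in\F\). On the set \(J_K(x,f)\) the quantity \(x_wf_w^\alpha\) equals \(\max_{w\in K}x_wf_w^\alpha\). Since \(\sum_{v}\omega_{K,v}=1\), this gives the pointwise identity
\[
  \max_{w\in K}x_wf_w^\alpha=\sum_{v\in K}x_vf_v^\alpha\,\omega_{K,v}(x,f).
\]
This holds also when \(f_v=0\) for all \(v\in K\), because then both sides vanish. Apply it with \(f=Z\), take expectations, and use Tonelli: this gives \eqref{68}. The bound \(\Psi_{K,v}\ge0\) is clear. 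The bound \(\Psi_{K,v}\le\E{Z_v^\alpha}=1\) holds because \(\omega_{K,v}\le1\). Scale invariance \(\Psi_{K,v}(cx)=\Psi_{K,v}(x)\) follows from \(\omega_{K,v}(cx,f)=\omega_{K,v}(x,f)\).

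\emph{Step 2: the tilt representation.} By \eqref{11} with \(H(f)=\omega_{K,v}(x,f)\), we have \(\E{\omega_{K,v}(x,\Theta^{[v]})}=\E{\one_{\{Z_v>0\}}Z_v^\alpha\,\omega_{K,v}(x,Z/Z_v)}\). The invariance \(\omega_{K,v}(x,df)=\omega_{K,v}(x,f)\) with \(d=1/Z_v\) then gives \(\E{Z_v^\alpha\omega_{K,v}(x,Z)}\). On \(\{Z_v=0\}\) the integrand is zero either way.

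For \eqref{70}, assume \eqref{17}. The map \(F(f)=f_v^\alpha\omega_{K,v}(x,f)\) lies in \(\Hh_\alpha\): it is \(\alpha\)-homogeneous by the invariance of \(\omega\), and \(F(\zero)=0\). Hence \eqref{18}, i.e. \(\Theta\sim_\alpha Z\), gives \(\E{F(\Theta)}=\E{F(Z)}\), which is \eqref{70}. One measurability point needs checking, namely that \(\omega_{K,v}\) depends only on finitely many coordinates. This is immediate from the definition via argmax over the finite set \(K\).

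\emph{Step 3: the characterisation \eqref{69}.} Suppose \(\eta\) is branch-stationary. Use (v) of \Cref{13}, namely \eqref{16}, with \(u=v\) and the \(0\)-homogeneous functional \(F(f)=\omega_{K,v}(x,f)\). This \(F\) lies in \(\Hh_0\) after setting \(F(\zero)=0\), which only requires fixing the convention at \(\zero\); the weights are then irrelevant there since \(Z_v^\alpha\) kills it. Combining with the shift rule \(\omega_{aK,av}(x^a,f)=\omega_{K,v}(x,B^af)\) yields
\[
  \Psi_{aK,av}(x^a)=\E{Z_{av}^\alpha\omega_{K,v}(x,B^aZ)}=\E{Z_v^\alpha\omega_{K,v}(x,Z)}=\Psi_{K,v}(x).
\]

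Conversely, if \eqref{69} holds, then by \eqref{68}
\[
  \ell_{aK}(x^a)=\sum_{v\in K}x_v\Psi_{aK,av}(x^a)=\ell_K(x)
\]
for all \(x\in(0,\infty)^K\). Continuity of \(\ell_K\) extends this equality to \([0,\infty)^K\), and \Cref{61} (ii)\(\Rightarrow\)(i) then gives branch stationarity.

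The only delicate point, and the step I would check most carefully, is the convention for \(\omega\) at fields with \(f_w=0\) for all \(w\in K\). There \(J_K=K\) and the weights equal \(1/|K|\), so \(F(\zero)\neq0\). To place \(F\) in \(\Hh_0\), I would replace \(\omega_{K,v}\) by \(\one_{\{f\neq\zero\}}\omega_{K,v}\). This does not change any expectation weighted by \(Z_v^\alpha\) or \(Z_{av}^\alpha\), and it preserves the shift rule on the relevant event.
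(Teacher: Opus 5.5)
Your proposal is correct and follows essentially the same route as the paper. In both, the pointwise winner identity gives \eqref{68}, the tilt \eqref{11} with zero-homogeneity of \(\omega_{K,v}\) gives the local-field representation, \Cref{13} with the shift rule gives the forward implication, and \eqref{68} with continuity and \Cref{61} gives the converse. The only differences are that you invoke \eqref{16} and \eqref{18}, where the paper uses \(B^a\Theta^{[av]}\eqd\Theta^{[v]}\) from \Cref{13}(iv) and a direct root-tilt computation. Working with that equality in law avoids the \(F(\zero)=0\) issue, which you correctly handled by multiplying with \(\one_{\{f\neq\zero\}}\).
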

 
For a given \(x\in(0,\infty)^K\), suppose that the maximum of
\[
  \left(
    x_w(\Theta_w^{[v]})^\alpha
  \right)_{w\in K}
\]
is almost surely unique. Since \(\Theta_v^{[v]}=1\) almost surely,
\[
  \Psi_{K,v}(x)
  =
  \Pp\left\{
    x_v>
    x_w(\Theta_w^{[v]})^\alpha,
    \quad w\in K\setminus\{v\}
  \right\}.
\]
Thus \(\Psi_{K,v}(x)\) is the probability that \(v\) is the winner
under the \(v\)-rooted spectral law.
 
The decomposition also gives, for \(u\in(0,1)^K\)
\[
  C_K(u)
  =
  \prod_{v\in K}
  u_v^{\Psi_{K,v}(-\log u)},
  \qquad
  C_K^\psi(u)
  =
  \psi\left(
    \sum_{v\in K}
    \psi^{-1}(u_v)
    \Psi_{K,v}\bigl(\psi^{-1}(u)\bigr)
  \right).
\]
 
For \(0<\beta\leq1\), let
\[
  \psi_\beta(t)=e^{-t^\beta},
  \qquad t\geq0,
\]
and let \(S_\beta\) be a positive \(\beta\)-stable random variable,
independent of \(\eta\), satisfying
\[
  \E{e^{-tS_\beta}}=e^{-t^\beta},
  \qquad t\geq0.
\]
For \(\beta=1\), this means \(S_1=1\) almost surely. Setting
\begin{equation}
  \eta_v^{(\beta)}
  =
  S_\beta^{\beta/\alpha}\eta_v^\beta,
  \qquad v\in\T,
  \label{71}
\end{equation}
we have
\[
\begin{aligned}
  \exp\left\{
    -(\eta_v^{(\beta)})^{-\alpha}
  \right\}
  &=
  \exp\left\{
    -\left(
      \frac{\eta_v^{-\alpha}}{S_\beta}
    \right)^\beta
  \right\} =
  \psi_\beta\left(
    \frac{\eta_v^{-\alpha}}{S_\beta}
  \right).
\end{aligned}
\]
By \Cref{61},
\(\eta^{(\beta)}\) has unit \(\alpha\)-Fr\'echet margins and copula
\(C_K^{\psi_\beta}\).
 
For \(x\in[0,\infty)^K\),
\[
\begin{aligned}
  C_K^{\psi_\beta}(e^{-x})
  &=
  \exp\left\{
    -\left[
      \ell_K\bigl((x_v^{1/\beta})_{v\in K}\bigr)
    \right]^\beta
  \right\} \\
  &=
  \exp\{-\ell_K^{(\beta)}(x)\},
\end{aligned}
\]
where
\begin{equation}
  \ell_K^{(\beta)}(x)
  =
  \left[
    \ell_K\bigl((x_v^{1/\beta})_{v\in K}\bigr)
  \right]^\beta.
  \label{72}
\end{equation}
The function \(\ell_K^{(\beta)}\) is \(1\)-homogeneous, since
\[
\begin{aligned}
  \ell_K^{(\beta)}(cx)
  &=
  \left[
    \ell_K\bigl(
      c^{1/\beta}(x_v^{1/\beta})_{v\in K}
    \bigr)
  \right]^\beta=
  c\,\ell_K^{(\beta)}(x).
\end{aligned}
\]
Since \(C_K^{\psi_\beta}\) is a copula and
\(\ell_K^{(\beta)}\) is \(1\)-homogeneous, it is an extreme-value
copula with stable tail dependence function
\(\ell_K^{(\beta)}\). Consequently, \(\eta^{(\beta)}\) is max-stable.
 
If \(\eta\) is branch-stationary, then so is \(\eta^{(\beta)}\),
either directly from \eqref{71} or from the
common-prefix invariance of \(\ell_K^{(\beta)}\). For \(\beta<1\),
this gives a positive-stable extension of the branch-BR models
considered above.

\section{Proofs}
 
\begin{proof}[Proof of \Cref{10}]
Let \(G\in\Hh_\alpha\). Since \(G(\zero)=0\), the set
\[
  A_G=\{f\in\F^\circ:G(f)>1\}
\]
belongs to \(\mathcal B(\F^\circ)\). By the
\(\alpha\)-homogeneity of \(G\) and Tonelli's theorem,
\[
\begin{aligned}
  \nu_X^\circ(A_G)
  &=
  \int_0^\infty
  \Pp\{G(rX)>1\}
  \alpha r^{-\alpha-1}\dd r \\
  &=
  \E*{
    \int_0^\infty
    \one_{\{r^\alpha G(X)>1\}}
    \alpha r^{-\alpha-1}\dd r
  }
  =
  \E{G(X)}.
\end{aligned}
\]
Indeed, for every \(t\in[0,\infty]\),
\[
  \int_0^\infty
  \one_{\{r^\alpha t>1\}}
  \alpha r^{-\alpha-1}\dd r
  =
  t,
\]
with the usual interpretation when \(t=\infty\). Consequently,
\(\nu_X^\circ=\nu_Y^\circ\) implies
\[
  \E{G(X)}=\E{G(Y)},
  \qquad G\in\Hh_\alpha,
\]
and hence \(X\sim_\alpha Y\).
 
Conversely, suppose that \(X\sim_\alpha Y\). For
\(A\in\mathcal B(\F^\circ)\), define
\[
  K_A(x)
  =
  \int_0^\infty
  \one_{\{rx\in A\}}
  \alpha r^{-\alpha-1}\dd r,
  \qquad x\in\F.
\]
The map \(K_A\) is measurable because scalar multiplication
\[
  (r,x)\longmapsto rx
\]
is jointly measurable. Moreover, \(K_A(\zero)=0\), and for every
\(c>0\),
\[
\begin{aligned}
  K_A(cx)
  &=
  \int_0^\infty
  \one_{\{rcx\in A\}}
  \alpha r^{-\alpha-1}\dd r =
  c^\alpha
  \int_0^\infty
  \one_{\{sx\in A\}}
  \alpha s^{-\alpha-1}\dd s
  =
  c^\alpha K_A(x).
\end{aligned}
\]
Thus \(K_A\in\Hh_\alpha\). By Tonelli's theorem,
\[
  \E{K_A(X)}
  =
  \int_0^\infty
  \Pp\{rX\in A\}
  \alpha r^{-\alpha-1}\dd r
  =
  \nu_X^\circ(A),
\]
and similarly
\[
  \E{K_A(Y)}=\nu_Y^\circ(A).
\]
Since \(X\sim_\alpha Y\),
\[
  \nu_X^\circ(A)=\nu_Y^\circ(A),
  \qquad A\in\mathcal B(\F^\circ),
\]
which proves \(\nu_X^\circ=\nu_Y^\circ\).
\end{proof}
 
\begin{proof}[Proof of \Cref{13}]
Fix \(a\in\T\) and put
\[
  \mu_a=\nu_{B^aZ}^\circ,
  \qquad
  \mu=\nu_Z^\circ.
\]
 
For every non-empty finite \(K\subset\T\) and
\(x=(x_v)_{v\in K}\in(0,\infty)^K\), let
\[
  U_{K,x}
  =
  \left\{
    f\in\F^\circ:
    \max_{v\in K}\frac{f_v}{x_v}>1
  \right\}.
\]
Radial integration gives, for every non-negative field \(X\),
\[
  \nu_X^\circ(U_{K,x})
  =
  \E*{
    \max_{v\in K}
    \frac{X_v^\alpha}{x_v^\alpha}
  }.
\]
Since \(B^a\eta\) is represented by \(B^aZ\), it follows that
\(B^a\eta\eqd\eta\) if and only if
\[
  \mu_a(U_{K,x})=\mu(U_{K,x})
\]
for every \(K\) and \(x\).
 
These equalities determine the measures. Indeed, the sets \(U_{K,x}\)
are finite unions of coordinate exceedance sets. Hence
inclusion--exclusion determines the measures of all finite
intersections
\[
  \bigcap_{j=1}^n
  \{f\in\F^\circ:f_{v_j}>x_j\},
\]
which form a \(\pi\)-system generating
\(\mathcal B(\F^\circ)\). All quantities involved are finite, since
for \(X=Z\) or \(X=B^aZ\),
\[
  \nu_X^\circ(U_{K,x})
  \leq
  \sum_{v\in K}x_v^{-\alpha}\E{X_v^\alpha}
  =
  \sum_{v\in K}x_v^{-\alpha}.
\]
Moreover, using the countability of \(\T\),
\[
  \F^\circ
  =
  \bigcup_{v\in\T}\bigcup_{m\geq1}
  \{f\in\F^\circ:f_v>1/m\},
\]
and
\[
  \mu_a\{f:f_v>1/m\}
  =
  \mu\{f:f_v>1/m\}
  =
  m^\alpha.
\]
The uniqueness theorem for \(\sigma\)-finite measures therefore yields
\[
  B^a\eta\eqd\eta
  \quad\Longleftrightarrow\quad
  \mu_a=\mu.
\]
By \Cref{10} we have 
\[
  \mu_a=\mu
  \quad\Longleftrightarrow\quad
  B^aZ\sim_\alpha Z
\]
Furthermore, for \(A\in\mathcal B(\F^\circ)\), 
$
  \mu_a(A)
  =
  \mu\bigl((B^a)^{-1}A\bigr)$ because \(B^a(rf)=rB^af\). Thus \emph{(i)}--\emph{(iii)} are
equivalent.
 
We next compare the local spectral tail fields. Let \(X\) be a
non-negative field with \(\E{X_u^\alpha}=1\), let
\(\Theta_X^{[u]}\) be its local spectral tail field, and put
\[
  E_u=\{f\in\F^\circ:f_u>0\}.
\]
A radial change of variables gives
\begin{equation}
  \nu_X^\circ(A\cap E_u)
  =
  \int_0^\infty
  \Pp\{r\Theta_X^{[u]}\in A\}
  \alpha r^{-\alpha-1}\dd r.
  \label{73}
\end{equation}
In particular, for every \(C\in\mathcal B(\F)\),
\begin{equation}
  \Pp\{\Theta_X^{[u]}\in C\}
  =
  \nu_X^\circ
  \left\{
    f\in\F^\circ:
    f_u>1,\quad f/f_u\in C
  \right\}.
  \label{74}
\end{equation}
 
The local field of \(B^aZ\) at \(u\) satisfies
\[
  \Theta_{B^aZ}^{[u]}
  \eqd
  B^a\Theta^{[au]},
\]
as follows directly from its defining tilt. Hence
\(\mu_a=\mu\), together with
\eqref{74}, implies
\[
  B^a\Theta^{[au]}\eqd\Theta^{[u]},
  \qquad u\in\T.
\]
Thus \emph{(iii)} implies \emph{(iv)}.
 
Conversely, assume \emph{(iv)}. By
\eqref{73},
\[
  \mu_a(A\cap E_u)=\mu(A\cap E_u),
  \qquad
  A\in\mathcal B(\F^\circ),\quad u\in\T.
\]
Since the sets \(E_u\), \(u\in\T\), form a countable cover of
\(\F^\circ\), disjointifying this cover and using countable additivity
gives \(\mu_a=\mu\). Thus \emph{(iv)} implies \emph{(iii)}. Finally, for \(F\in\Hh_0\), zero-homogeneity and the definition of the
local fields give
\[
\begin{aligned}
  \E{F(B^a\Theta^{[au]})}
  &=
  \E*{
    \one_{\{Z_{au}>0\}}
    Z_{au}^\alpha F(B^aZ)
  },\quad
  \E{F(\Theta^{[u]})}
  &=
  \E*{
    \one_{\{Z_u>0\}}
    Z_u^\alpha F(Z)
  }
\end{aligned}
\]
and hence \emph{(iv)} implies \emph{(v)}.
 
Conversely, assume \emph{(v)}. For fixed \(a,u\in\T\) and bounded
non-negative measurable \(H:\F\to[0,\infty)\), define
\[
  F_{u,H}(f)
  =
  \begin{cases}
    H(f/f_u), & f_u>0,\\
    0, & f_u=0.
  \end{cases}
\]
Then \(F_{u,H}\in\Hh_0\), and
\eqref{16} yields
$
  \E{H(B^a\Theta^{[au]})}
  =
  \E{H(\Theta^{[u]})}$ and therefore
\[
  B^a\Theta^{[au]}
  \eqd
  \Theta^{[u]},
\]
so \emph{(v)} implies \emph{(iv)}. Since \(a\in\T\) was arbitrary, all five statements are equivalent.
\end{proof}
 
\begin{proof}[Proof of \Cref{20}]
By \eqref{19}, we have 
\[
  \E{H(B^a\Theta^{[a]})}
  =
  \E*{
    \one_{\{\Theta_a>0\}}
    \Theta_a^\alpha
    H\left(\frac{B^a\Theta}{\Theta_a}\right)
  }.
\]
If \(Z\) is branch-BR stationary, then
$
  B^a\Theta^{[a]}\eqd\Theta
$ is a consequence of \Cref{13}, and hence
\eqref{21} follows.
 
Conversely, assume \eqref{21}. Since \(\Theta\)
is an \(\alpha\)-spectral representer, taking
\(H(f)=f_v^\alpha\) gives
\[
  \E*{
    \one_{\{\Theta_a>0\}}
    \Theta_{av}^\alpha
  }
  =
  \E{\Theta_v^\alpha}
  =
  1
  =
  \E{\Theta_{av}^\alpha}.
\]
Therefore
\[
  \Pp\{\Theta_a=0,\ \Theta_{av}>0\}=0,
  \qquad a,v\in\T.
\]
By countability of \(\T\),
\[
  B^a\Theta=\zero
  \quad\text{on }\{\Theta_a=0\}
  \quad\text{almost surely}.
\]
Consequently, for every \(F\in\Hh_\alpha\),
\[
\begin{aligned}
  \E{F(B^a\Theta)}
  &=
  \E*{
    \one_{\{\Theta_a>0\}}
    \Theta_a^\alpha
    F\left(\frac{B^a\Theta}{\Theta_a}\right)
  } =
  \E{F(\Theta)}.
\end{aligned}
\]
Thus
\[
  B^a\Theta\sim_\alpha\Theta,
  \qquad a\in\T.
\]
Since \(\Theta\) is a representer of the same max-stable field as
\(Z\), \Cref{13} yields branch-BR
stationarity.
 
The first probability in \eqref{22} is zero by
the preceding argument. Moreover, the defining root tilt gives
\[
  \Pp\{\Theta_u=0,\ \Theta_{uv}>0\}
  =
  \E*{
    Z_\varnothing^\alpha
    \one_{\{Z_u=0,\ Z_{uv}>0\}}
  }.
\]
Since \(Z_\varnothing>0\) almost surely, the right-hand side vanishes
if and only if
\[
  \Pp\{Z_u=0,\ Z_{uv}>0\}=0.
\]
This proves \eqref{22}.
\end{proof}
 
\begin{proof}[Proof of \Cref{25}]
For \(u\in\T\), put
\[
  X_v^{(u)}=W_{uv}-W_u,
  \qquad v\in\T.
\]
By \eqref{24} and
\Cref{20}, \emph{(i)} is equivalent to equality
in law of the normalized lognormal fields generated by
\(X^{(u)}\) and \(X^{(\varnothing)}\), for every \(u\in\T\).
Taking logarithms, and using that these Gaussian fields are centred,
shows that this is equivalent to \emph{(ii)}.
 
If \emph{(ii)} holds, then
\[
\begin{aligned}
  \Gamma(uv,uw)
  &=
  \operatorname{Var}
  \left(
    (W_{uv}-W_u)-(W_{uw}-W_u)
  \right) \\
  &=
  \operatorname{Var}
  \left(
    (W_v-W_\varnothing)-(W_w-W_\varnothing)
  \right)
  =
  \Gamma(v,w),
\end{aligned}
\]
so \emph{(iii)} follows.
 
Conversely, under \eqref{26},
\[
\begin{aligned}
  \operatorname{Cov}
  (W_{uv}-W_u,W_{uw}-W_u)
  &=
  \frac12
  \left\{
    \Gamma(uv,u)
    +
    \Gamma(uw,u)
    -
    \Gamma(uv,uw)
  \right\} \\
  &=
  \frac12
  \left\{
    \Gamma(v,\varnothing)
    +
    \Gamma(w,\varnothing)
    -
    \Gamma(v,w)
  \right\}.
\end{aligned}
\]
This is the covariance of
\((W_v-W_\varnothing,W_w-W_\varnothing)\). Since the increment fields
are centred Gaussian, \emph{(ii)} follows.
\end{proof}
 
\begin{proof}[Proof of \Cref{39}]
Under \eqref{17},
\(Z_\varnothing^\alpha>0\) almost surely, so the tilted measure
\(Z_\varnothing^\alpha\dd\Pp\) and \(\Pp\) have the same null sets.
Since multiplication by the positive finite scalar
\(Z_\varnothing^{-1}\) does not affect finiteness of the forward
\(\alpha\)-mass, the defining root tilt
\eqref{11} gives the equivalence of
\emph{(i)} and \emph{(ii)}.
 
The equivalence of \emph{(ii)} and \emph{(iii)} follows from
\[
  0<R_{\mathrm P}<\infty
  \qquad\text{almost surely}.
\]
Finally, conditional on \(\Theta\), Tonelli's theorem and the Pareto
tail implies 
\[
\begin{aligned}
  \E{N^+(Y)\mid\Theta}
  &=
  \sum_{v\in\T}
  \Pp\{R_{\mathrm P}\Theta_v>1\mid\Theta\} =
  \sum_{v\in\T}
  \min\{1,\Theta_v^\alpha\}.
\end{aligned}
\]
For every non-negative sequence \((x_v)\)
\[
  \sum_v\min\{1,x_v\}<\infty
  \quad\Longleftrightarrow\quad
  \sum_vx_v<\infty.
\]
This proves the equivalence of \emph{(ii)} and \emph{(iv)}. The same
arguments with finiteness replaced by infiniteness prove the final
assertion.
\end{proof}
 
\begin{proof}[Proof of \Cref{43}]
Tonelli's theorem and the bijectivity of \(g\mapsto gv\) give
\[
\begin{aligned}
  \sum_{g\in\G}
  \E{\rho_\pi(Q^{[g]})^\alpha}
  &=
  \sum_{v\in\T}\pi_v
  \sum_{g\in\G}\E{Q_{gv}^\alpha} =
  \sum_{v\in\T}\pi_v
  \sum_{h\in\G}\E{Q_h^\alpha}
  =
  c_Q.
\end{aligned}
\]
Thus \eqref{44} defines a probability law. Taking
\[
  H(f)=\one_{\{\rho_\pi(f)=1\}}
\]
in \eqref{44} yields 
\[
  \rho_\pi(Z^{Q,\pi})=1
  \qquad\text{almost surely}.
\]
Similarly, for \(w\in\T\),
\[
  \Ehat{(Z_w^{Q,\pi})^\alpha}
  =
  \frac1{c_Q}
  \sum_{g\in\G}\E{Q_{gw}^\alpha}
  =
  1.
\]
Moreover, for each \(g\in\G\), almost surely on
\(\{\rho_\pi(Q^{[g]})>0\}\)
\[
\begin{aligned}
  \sum_{v\in\T}
  \left(
    \frac{Q_{gv}}{\rho_\pi(Q^{[g]})}
  \right)^\alpha
  &\leq
  \frac{\sum_{h\in\G}Q_h^\alpha}
       {\rho_\pi(Q^{[g]})^\alpha}
  <
  \infty.
\end{aligned}
\]
The mixture in \eqref{44} is therefore supported on
\(\alpha\)-summable fields.
 
If \(F\in\Hh_\alpha\), homogeneity cancels the gauge in
\eqref{44} and yields
\eqref{45}. Hence, for \(u\in\T\)
\[
\begin{aligned}
  \Ehat{F(B^uZ^{Q,\pi})}
  &=
  \frac1{c_Q}
  \sum_{g\in\G}\E{F(Q^{[gu]})} =
  \frac1{c_Q}
  \sum_{h\in\G}\E{F(Q^{[h]})}
  =
  \Ehat{F(Z^{Q,\pi})},
\end{aligned}
\]
because \(g\mapsto gu\) is a bijection. This proves branch-BR
stationarity and shows that the homogeneous class is independent of
\(\pi\).
 
On \(\{\rho_\pi(Q^{[g]})>0\}\), the contribution of the \(g\)-th
mixture component to the tail measure is
\[
  \rho_\pi(Q^{[g]})^\alpha
  \int_0^\infty
  \one_{\{rQ^{[g]}/\rho_\pi(Q^{[g]})\in A\}}
  \alpha r^{-\alpha-1}\dd r.
\]
The substitution \(s=r/\rho_\pi(Q^{[g]})\) cancels the gauge factor
and gives \eqref{46}.
 
For \(u\in\T\) and \(A\in\mathcal B(\F^\circ)\),
\[
\begin{aligned}
  \nu_Q\bigl((B^u)^{-1}A\bigr)
  &=
  \frac1{c_Q}
  \sum_{g\in\G}
  \int_0^\infty
  \Pp\left\{
    rQ^{[gu]}\in A,\ Q^{[g]}\neq\zero
  \right\}
  \alpha r^{-\alpha-1}\dd r.
\end{aligned}
\]
Since \(A\subset\F^\circ\), the event
\(\{rQ^{[gu]}\in A\}\) implies \(Q^{[gu]}\neq\zero\), and hence also
\(Q^{[g]}\neq\zero\). Therefore
\[
\begin{aligned}
  \nu_Q\bigl((B^u)^{-1}A\bigr)
  &=
  \frac1{c_Q}
  \sum_{g\in\G}
  \int_0^\infty
  \Pp\{rQ^{[gu]}\in A\}
  \alpha r^{-\alpha-1}\dd r =
  \frac1{c_Q}
  \sum_{h\in\G}
  \int_0^\infty
  \Pp\{rQ^{[h]}\in A\}
  \alpha r^{-\alpha-1}\dd r =
  \nu_Q(A),
\end{aligned}
\]
where the second equality uses the bijection \(g\mapsto gu\).
 
Moreover,
\[
  \nu_Q\{f:f_w>1/m\}=m^\alpha,
  \qquad w\in\T,\quad m\geq1.
\]
Since \(\T\) is countable, these sets form a countable
finite-measure cover of \(\F^\circ\). Together with
\(\nu_Q\{\zero\}=0\), this shows that \(\nu_Q\) is
\(\sigma\)-finite on \(\F\).
 
Finally, tilting \eqref{44} by the root coordinate
and normalizing at the root gives
\[
\begin{aligned}
  \E{H(\Theta^Q)}
  &=
  \Ehat*{
    \one_{\{Z_\varnothing^{Q,\pi}>0\}}
    (Z_\varnothing^{Q,\pi})^\alpha
    H\left(
      \frac{Z^{Q,\pi}}{Z_\varnothing^{Q,\pi}}
    \right)
  } =
  \frac1{c_Q}
  \sum_{g\in\G}
  \E*{
    \one_{\{Q_g>0\}}
    Q_g^\alpha
    H\left(
      \left(
        \frac{Q_{gv}}{Q_g}
      \right)_{v\in\T}
    \right)
  }.
\end{aligned}
\]
This proves \eqref{48} and shows that its law
does not depend on \(\pi\). On \(\{Q_g>0\}\),
\[
  \sum_{v\in\T}
  \left(
    \frac{Q_{gv}}{Q_g}
  \right)^\alpha
  \leq
  \frac{\sum_{h\in\G}Q_h^\alpha}{Q_g^\alpha}
  <
  \infty.
\]
This proves the last assertion.
\end{proof}
 
\begin{proof}[Proof of \Cref{49}]
The gauge-biased law gives
\[
  \widehat{\Pp}_\pi
  \{Z_\varnothing^{Q,\pi}=0\}
  =
  \frac1{c_Q}
  \sum_{g\in\G}
  \E*{
    \rho_\pi(Q^{[g]})^\alpha
    \one_{\{Q_g=0\}}
  }.
\]
Because every \(\pi_v\) is positive,
\(\rho_\pi(Q^{[g]})>0\) if and only if
\(Q_{gv}>0\) for at least one \(v\in\T\). Since \(\G\) and \(\T\)
are countable, the displayed probability vanishes if and only if
\eqref{50} holds.
\end{proof}
 
\begin{proof}[Proof of \Cref{40}]
Let \(\Phi\) be the standard normal distribution function, fix
\(p\in(0,1)\), and put
\[
  c=\Phi^{-1}(p).
\]
Take \(\mathcal A=\{2,3,\ldots\}\) and define
\[
  a_k=c+\sqrt{c^2+2\log k},
  \qquad k\geq2.
\]
Then \(a_k>0\) and
\begin{equation}
  ca_k-\frac{a_k^2}{2}=-\log k.
  \label{75}
\end{equation}
For \(v=(k_1,\ldots,k_n)\), let
\[
  A_v=\sum_{j=1}^n a_{k_j},
  \qquad
  A_\varnothing=0.
\]
If \(X\) is standard normal, set
\begin{equation}
  W_v=A_vX,
  \qquad
  Z_v
  =
  \exp\left\{
    A_vX-\frac{A_v^2}{2}
  \right\}.
  \label{76}
\end{equation}
Here \(W=(W_v)_{v\in\T}\) is a rank-one centered Gaussian field, and
\(Z\) is its associated lognormal spectral field. Moreover,
\(Z_\varnothing=1\), \(\E{Z_v}=1\), and
\[
  A_{uv}=A_u+A_v.
\]
Consequently, we have 
\[
  \Gamma(uv,uw)
  =
  (A_{uv}-A_{uw})^2
  =
  (A_v-A_w)^2
  =
  \Gamma(v,w).
\]
Thus \Cref{25} shows that \(Z\) is branch-BR
stationary.
 
Conditionally on \(X=x\), the mass of the first generation is
\begin{equation}
\begin{aligned}
  F(x)
  &:=
  \sum_{k\geq2}
  \exp\left\{
    xa_k-\frac{a_k^2}{2}
  \right\} =
  \sum_{k\geq2}
  \frac1k
  \exp\{(x-c)a_k\}.
\end{aligned}
  \label{77}
\end{equation}
Since \(a_k\sim\sqrt{2\log k}\), the integral test with the
substitution \(s=\sqrt{2\log t}\) gives
\begin{equation}
  F(x)<\infty
  \quad\Longleftrightarrow\quad
  x<c.
  \label{78}
\end{equation}
For \(x\geq c\), the series in
\eqref{77} dominates the harmonic series.
 
We next show that convergence of the first-generation mass implies
convergence over the whole tree. Put
\[
  a_*=a_2>0.
\]
If \(v=(k_1,\ldots,k_n)\), then
\[
\begin{aligned}
  \frac{A_v^2}{2}
  &=
  \frac12\sum_{j=1}^n a_{k_j}^2
  +
  \sum_{1\leq i<j\leq n}a_{k_i}a_{k_j} \geq
  \frac12\sum_{j=1}^n a_{k_j}^2
  +
  \binom n2a_*^2.
\end{aligned}
\]
Hence for \(x<c\)
\[
\begin{aligned}
  \sum_{v\in\mathcal A^n}Z_v
  &\leq
  \exp\left\{
    -\binom n2a_*^2
  \right\}
  \sum_{(k_1,\ldots,k_n)\in\mathcal A^n}
  \prod_{j=1}^n
  \exp\left\{
    xa_{k_j}-\frac{a_{k_j}^2}{2}
  \right\} =
  \exp\left\{
    -\binom n2a_*^2
  \right\}
  F(x)^n.
\end{aligned}
\]
The right-hand side is summable over \(n\). For \(x\geq c\), the first
generation already has infinite mass. Therefore
 we have $
  \{S^+(Z)<\infty\}
  =
  \{X<c\},
$
and hence
$
  \Pp\{S^+(Z)<\infty\}
  =
  \Phi(c)
  =
  p.$
\end{proof}
 
\begin{proof}[Proof of \Cref{61}]
Since \(\psi\) is strictly decreasing, for \(u\in(0,1]^K\),
\[
  U_v^\psi\leq u_v
  \quad\Longleftrightarrow\quad
  \eta_v^{-\alpha}
  \geq
  S_\psi\psi^{-1}(u_v).
\]
Conditioning on \(S_\psi\), using
\eqref{58} and the homogeneity of \(\ell_K\),
we obtain
\[
\begin{aligned}
  \Pp\{U_v^\psi\leq u_v,\ v\in K\mid S_\psi\}
  &=
  \exp\left\{
    -\ell_K\bigl(S_\psi\psi^{-1}(u)\bigr)
  \right\} =
  \exp\left\{
    -S_\psi\ell_K\bigl(\psi^{-1}(u)\bigr)
  \right\}.
\end{aligned}
\]
Taking expectations gives \eqref{60}. For
\(K=\{v\}\), since \(\ell_{\{v\}}(t)=t\),
\[
  \Pp\{U_v^\psi\leq u\}
  =
  \psi(\psi^{-1}(u))
  =
  u.
\]
Projective consistency follows from
\[
  \ell_L(x,0_{L\setminus K})=\ell_K(x)
  \qquad\text{and}\qquad
  \psi^{-1}(1)=0.
\]
 
Under the canonical identification of \(aK\) with \(K\), the stable
tail dependence function of \((\eta_{av})_{v\in K}\) is
\[
  x\longmapsto\ell_{aK}(x^a).
\]
Hence \emph{(i)} and \emph{(ii)} are equivalent when the identities
are required for every non-empty finite \(K\subset\T\). Formula
\eqref{59} gives the equivalence of
\emph{(ii)} and \emph{(iii)}.
 
Condition \emph{(ii)} implies \emph{(iv)} by
\eqref{60}. Conversely, taking
\[
  u_v=\psi(x_v),
  \qquad v\in K,
\]
and using the injectivity of \(\psi\), condition \emph{(iv)} yields
$  
  \ell_{aK}(x^a)=\ell_K(x).$
Finally, since \(U^\psi\) has standard uniform margins, invariance of
its finite-dimensional copulas is equivalent to branch stationarity.
Thus \emph{(iv)} and \emph{(v)} are equivalent.
 
Under \eqref{17}, we have \(Z_\varnothing>0\) almost surely. The
root tilt therefore gives
\[
\begin{aligned}
  \E*{
    \max_{v\in K}x_v\Theta_v^\alpha
  }
  &=
  \E*{
    Z_\varnothing^\alpha
    \max_{v\in K}
    x_v
    \left(
      \frac{Z_v}{Z_\varnothing}
    \right)^\alpha
  } =
  \E*{
    \max_{v\in K}x_vZ_v^\alpha
  }
  =
  \ell_K(x),
\end{aligned}
\]
which proves \eqref{64}.
\end{proof}
 
\begin{proof}[Proof of \Cref{66}]
For every \(x\in(0,\infty)^K\) and \(f\in\F\),
\[
  \sum_{v\in K}
  x_vf_v^\alpha\omega_{K,v}(x,f)
  =
  \max_{v\in K}x_vf_v^\alpha.
\]
Taking expectations gives \eqref{68}. The bound
and homogeneity of \(\Psi_{K,v}\) follow directly from
\[
  0\leq\omega_{K,v}\leq1
  \qquad\text{and}\qquad
  \E{Z_v^\alpha}=1.
\]
 
Since \(\omega_{K,v}(x,\cdot)\) is zero-homogeneous, the definition of
the local spectral tail field gives
\[
\begin{aligned}
  \E*{
    \omega_{K,v}(x,\Theta^{[v]})
  }
  &=
  \E*{
    \one_{\{Z_v>0\}}
    Z_v^\alpha
    \omega_{K,v}\left(
      x,\frac{Z}{Z_v}
    \right)
  } =
  \E*{
    Z_v^\alpha\omega_{K,v}(x,Z)
  }
  =
  \Psi_{K,v}(x).
\end{aligned}
\]
 
Suppose that \(\eta\) is branch-stationary. By
\Cref{13},
\[
  B^a\Theta^{[av]}
  \eqd
  \Theta^{[v]}.
\]
Therefore
\[
\begin{aligned}
  \Psi_{aK,av}(x^a)
  &=
  \E*{
    \omega_{aK,av}(x^a,\Theta^{[av]})
  } =
  \E*{
    \omega_{K,v}(x,B^a\Theta^{[av]})
  } =
  \E*{
    \omega_{K,v}(x,\Theta^{[v]})
  }
  =
  \Psi_{K,v}(x).
\end{aligned}
\]
 
Conversely, suppose that
\eqref{69} holds for every non-empty finite
\(K\subset\T\). Then
\[
\begin{aligned}
  \ell_{aK}(x^a)
  &=
  \sum_{v\in K}
  x_v\Psi_{aK,av}(x^a) =
  \sum_{v\in K}
  x_v\Psi_{K,v}(x)
  =
  \ell_K(x)
\end{aligned}
\]
for \(x\in(0,\infty)^K\). By continuity, this identity extends to
\([0,\infty)^K\), and
\Cref{61} yields branch stationarity. Finally, under \eqref{17}
\[
\begin{aligned}
  \E*{
    \Theta_v^\alpha
    \omega_{K,v}(x,\Theta)
  }
  &=
  \E*{
    Z_\varnothing^\alpha
    \left(
      \frac{Z_v}{Z_\varnothing}
    \right)^\alpha
    \omega_{K,v}\left(
      x,\frac{Z}{Z_\varnothing}
    \right)
  } =
  \E*{
    Z_v^\alpha\omega_{K,v}(x,Z)
  }
  =
  \Psi_{K,v}(x),
\end{aligned}
\]
which proves \eqref{70}.
\end{proof}

\bibliographystyle{ieeetr}
\bibliography{GG_v26}
 
\end{document}